\newcommand{\mapsfrom}{\mathrel{\mbox{$\leftarrow\joinrel\mapstochar$\,}}}
\newcommand{\Mo}{{\mathcal M}}
\newcommand{\ca}{{\mathcal C}}
\newcommand{\Bc}{{\mathcal B}}
\newcommand{\cM}{{\mathcal M}}
\newcommand{\cO}{{\mathcal O}}
\newcommand{\Z}{{\mathbb Z}}
\newcommand{\N}{{\mathbb N}}
\newcommand{\id}{\mbox{\rm id\,}}
\newcommand{\ku}{\Bbbk}
\newcommand{\Ind}{\mbox{\rm Ind\,}}
\newcommand{\Vect}{\mbox{\rm Vec}}
\newcommand\Rep{\operatorname{Rep}}
\newcommand{\ord}{\mathop{\rm ord}}
\newcommand{\ad}{\mbox{\rm ad\,}}
\theoremstyle{plain}
\numberwithin{equation}{section}
\newtheorem{teo}{Theorem}[section]
\newtheorem{lema}[teo]{Lemma}
\newtheorem{cor}[teo]{Corollary}
\newtheorem{prop}[teo]{Proposition}
\theoremstyle{definition}
\newtheorem{defi}[teo]{Definition}
\newtheorem{exa}[teo]{Example}
\theoremstyle{remark}
\newtheorem{rmk}[teo]{Remark}
\def\pf{\begin{proof}}
\def\epf{\end{proof}}
\theoremstyle{remark}
\begin{document}

\title[De-equivariantization of Hopf algebras]
{De-equivariantization of Hopf algebras}
\author[Angiono, Galindo and Pereira]{Iv\'an Angiono, C\'esar Galindo and Mariana Pereira}

\address{I. A.: FaMAF-CIEM (CONICET), Universidad Nacional de C\'ordoba,
Medina A\-llen\-de s/n, Ciudad Universitaria, 5000 C\' ordoba, Rep\'
ublica Argentina.}

\email{angiono@famaf.unc.edu.ar}

\address{C. G.: Departamento de matem\'aticas, Universidad de los Andes, Carrera 1 N. 18A - ´
10, Bogot\'a, Colombia.}

\email{cn.galindo1116@uniandes.edu.co}

\address{M. P.: Instituto de Matem\'atica y Estad\'istica Rafael Laguardia. Facultad  de Ingenier\'ia. Universidad de la Rep\'ublica. J.H.Reissig 565, CP 11.300 , Montevideo, Uruguay.}

\email{maripere@fing.edu.uy}

\thanks{\noindent 2010 \emph{Mathematics Subject Classification.}
16W30, 18D10, 19D23. \newline The work of I. A. was partially
supported by CONICET, FONCyT-ANPCyT and Secyt (UNC).
M.P. is grateful for the support from the grant ANII FCE 2007-059}

\begin{abstract}
We study the de-equivariantization of a Hopf algebra by an affine
group scheme and we apply Tannakian techniques in order to realize
it as the tensor category of comodules over a coquasi-bialgebra. As
an application we construct a family of coquasi-Hopf algebras
$A(H,G,\Phi)$ attached to a coradically-graded pointed Hopf algebra
$H$ and some extra data.
\end{abstract}

\maketitle

\section*{Introduction}

Actions of groups over abelian categories have been studied in recent years
with the purpose of constructing, describing and studying
categories with symmetries. For example, Gaitsgory \cite{G}
introduced the notion of the action of an affine group scheme $G$
over a $\mathbb C$-linear abelian category $\ca$ and the category of
$G$-equivariant objects $\ca^G$, called the equivariantization of
$\ca$ by $G$. The category $\ca^G$ has an action of $\Rep(G)$ and
the category of Hecke eigen-objects in $\ca^G$ is again $\ca$. In
general, if $\Rep(G)$ acts on an abelian category $\mathcal{C}$,
then the category of Hecke eigen-objects in $\mathcal{C}$ is called
the de-equivariantization of $\mathcal{C}$ by $G$.

Equivariantization and de-equivariantization are standard techniques
in theory of fusion categories \cite{DGNO} and have been applied in
geometric Langlands program \cite{FG} and quantum groups \cite{ArG}.

Now, if $\ca$ is a tensor category and the action of $\Rep(G)$ over
$\ca$ is tensorial, then the de-equivariantization has a natural
tensor structure. A special but very important type of tensor
categories are those equivalent to the category of corepresentations
of a Hopf algebra, which include representations of algebraic
groups, quantum groups, compact groups, etc. If $\ca$ is the
category of comodules (or finite dimensional modules) over a Hopf
algebra, then  $\ca^G\to \ca\to \Vect$ is a fiber functor on $\ca^G$
(where $\ca^G\to \ca$ is the forgetful functor) and by Tannakian
duality $\ca^G$ is the category of comodules over Hopf algebra.
Thus, the family of Hopf algebras is closed under equivariantization, in the sense that we obtain new categories which are equivalent to categories of corepresentations of Hopf algebras.
This is not the case for the de-equivariantization process since the de-equivariantization of comodules
over a Hopf algebra is not always equivalent to the category of
corepresentations over a Hopf algebra (see Subsection
\ref{subseccion ejemplo}, for concrete examples). However, under
some mild conditions, it is always the category of
corepresentations over a coquasi-bialgebra. As a consequence there
exist coquasi-Hopf algebras not twist equivalent to Hopf algebras,
which admit an equivariantization equivalent to a Hopf algebra. This
phenomenon was used  in \cite{EG2} to relate the Drinfeld doubles of
some quasi-Hopf algebras with small quantum groups, and in \cite{A}
in order to classify the family of basic quasi-Hopf algebras with
cyclic group of one-dimensional representations, under some mild
conditions.

In this paper we study the de-equivariantization of the category of
comodules over a Hopf algebra by an affine group scheme and apply
Tannakian techniques to realize the de-equivariantization as the
tensor category of comodules over a coquasi-bialgebra.

We apply the construction to interpret the central
extensions of Hopf algebras as a particular example, and an additional
application to the context of pointed finite tensor categories,
extending the family of examples obtained in \cite{EG2}, \cite{Ge}, \cite{A}.

The organization of the paper is the following. In Section
\ref{section:preliminaries} we recall the definitions related with
the main construction of this paper. First, the relation between
affine group schemes and commutative algebras, then co-quasi
bialgebras, and finally the center of a tensor category. In Section
\ref{section:De-equivariantization} we build a co-quasi Hopf algebra
which represents the tensor category obtained as the
de-equivariantization of the category of co-representations of a
Hopf algebra. To do this, we consider central braided Hopf
bialgebras, which are in correspondence with inclusions of tensor
categories of comodules over Hopf algebras with certain
factorization through the center, making emphasis on the case of
algebras of functions over an affine group (in particular, over
finite groups). We then obtain the corresponding coquasi-Hopf
algebra representing a de-equivariantization over the comodules of a
Hopf algebra by a Tannakian reconstruction. Finally, Section
\ref{section:applications} contains some applications of the
previous results. The main one is the case of finite-dimensional
pointed Hopf algebras, which gives place to a general construction of
pointed coquasi-Hopf algebras, and consequently finite pointed
tensor categories.

\section{Preliminaries}\label{section:preliminaries}

In this section we recall some definitions and results on Hopf
algebras, affine group schemes and coquasi-Hopf algebras. For further reading on these topics we the reader to \cite{Mont}, \cite{Wat} and \cite{Sch2} respectively. Throughout
the paper we work over an arbitrary field $\ku$. Algebras and
coalgebras are always defined over $\ku$. For a coalgebra
$(C,\Delta,\varepsilon)$ we shall use Sweedler's notation
omitting the sum symbol, that is $\Delta(c)= c_1\otimes c_2$ for all
$c\in C$. Similarly if $(M,\lambda)$ is a left $C$-comodule, then
$\lambda(m)= m_{-1}\otimes m_0\in C\otimes M$ for all $m\in M$. The
category of left $C$-comodules shall be denoted by $^C\Mo$.

\subsection{Affine group scheme and commutative Hopf algebras}
Let $\ku$-$\mathcal{A}lg$ denote the category of commutative
$\ku$-algebras and $\mathcal{G}rp$ the category of groups. An affine
group scheme over $\ku$ is a representable functor
$G:\ku$-$\mathcal{A}lg \to \mathcal{G}rp$. By Yoneda's lemma the
commutative algebra that represents $G$ is unique up to
isomorphisms, and we shall denote it by $\mathcal{O}(G)$. The group
structures on $G(A)$, $A\in \ku$-$\mathcal{A}lg$, determine natural
transformations
\begin{align*}
    m:& \; G\times G\to G,\\
1:& \; Sp(\ku)\to G,\\
i:& \; G\to G,
\end{align*}and they define algebra maps
\begin{align*}
    \Delta:& \; \mathcal{O}(G) \to \mathcal{O}(G)\otimes \mathcal{O}(G),\\
\varepsilon:& \; \mathcal{O}(G)\to \ku,\\
\mathcal{S}:& \; \mathcal{O}(G)\to \mathcal{O}(G),
\end{align*}
that give a Hopf algebra structure on $\mathcal{O}(G)$. Conversely,
if $K$ is a commutative Hopf algebra, then
$Spec(K):\ku\text{-}\mathcal{A}lg\to \mathcal{S}et, A\mapsto
Alg(K,A)$ is an affine group scheme with group structure given by
the convolution product and this defines an anti-equivalence of
categories between affine groups schemes over $\ku$ and commutative
Hopf algebras over $\ku$.

Under this equivalence the category of representations of $G$ is
equivalent  to the category of  $\mathcal{O}(G)$-comodules, and
quasi-coherent sheaves on $G$ are $\mathcal{O}(G)$-modules.

\subsection{Coquasi-bialgebras}

A \emph{coquasi-bialgebra} $(H,m,u,\omega ,\Delta ,\varepsilon )$ is
a coalgebra $(H,\Delta ,\varepsilon )$ together with
coalgebra morphisms:
\begin{itemize}
 \item the multiplication $m:H\otimes H\longrightarrow
H$ (denoted $ m(h\otimes g)=hg$),
 \item the unit $u:\ku \longrightarrow H$ (where we call
$ u(1)=1_{H} $),
\end{itemize}
and a convolution invertible element $\omega \in (H\otimes H\otimes
H)^{\ast }$ such that for all $h,g,k,l\in H$:
\begin{eqnarray}
h_{1}(g_{1}k_{1})\omega (h_{2},g_{2},k_{2}) &=&\omega
(h_{1},g_{1},k_{1})(h_{2}g_{2})k_{2}  \label{asociat multipl}
\\ 1_{H}h &=&h1_{H}=h
\\ \omega(h_{1}g_{1},k_{1},l_{1})\omega (h_{2},g_{2},k_{2}l_{2}) &=& \omega
(h_{1},g_{1},k_{1})\label{cocycle omega}
\\ && \qquad \omega(h_{2},g_{2}k_{2},l_{1}) \omega (g_{3},k_{3},l_{2})\nonumber
\\ \omega (h,1_{H},g) &=&\varepsilon (h)\varepsilon (g). \end{eqnarray}

Note that $\omega (1_{H},h,g)=\omega (h,g,1_{H})=\varepsilon
(h)\varepsilon (g)$ for each $g,h\in H$.
\medskip

A \emph{coquasi-Hopf algebra} is a coquasi-bialgebra $H$ endowed
with a coalgebra anti-homomorphism $\mathcal{S}:H\longrightarrow H$
(the \emph{antipode}) and elements $\alpha $, $\beta \in H^{\ast }$
satisfying, for all $h\in H$:
\begin{eqnarray} \mathcal{S}(h_{1})\alpha (h_{2})h_{3} &=&\alpha (h)1_{H}  \label{SalfaId}
\\ h_{1}\beta (h_{2})\mathcal{S}(h_{3}) &=&\beta (h)1_{H}  \label{IdbetaS}
\\  \label{omega anihileaza S} \varepsilon (h)&=&\omega (h_{1}\beta (h_{2}),\mathcal{S}(h_{3}),\alpha (h_{4})h_{5})
\\ &=&\omega ^{-1}(\mathcal{S}(h_{1}),\alpha (h_{2})h_{3}\beta
(h_{4}),\mathcal{S}(h_{5})).\nonumber
\end{eqnarray}
\smallskip

The  category of left $H$-comodules $\ {}^{H} \mathcal{M}$ is rigid
and monoidal, where the tensor product is over the base field and
the comodule structure of the tensor product is the codiagonal one.
The associator is given by
\begin{eqnarray*} \phi _{U,V,W} &:&(U\otimes V)\otimes
W\longrightarrow U\otimes (V\otimes W) \\ \phi _{U,V,W}((u\otimes
v)\otimes w) &=&\omega (u_{-1},v_{-1},w_{-1})u_{0}\otimes
(v_{0}\otimes w_{0}) \end{eqnarray*} for $u\in U$, $v\in V$, $w\in
W$ and $U,V,W\in {}^{H}\mathcal{M}$. The dual coactions are given by
$\mathcal{S}$ and $\mathcal{S}^{-1}$, as in the case of Hopf
algebras.

\subsection{The center construction and the category of Yetter-Drinfeld modules}
The center construction produces a braided monoidal category
$\mathcal{Z}(\ca)$ from any monoidal category $\ca$, see \cite{ka}.
The objects of $\mathcal{Z}(\ca)$ are pairs $(Y, c_{-,Y})$, where $Y
\in \ca$ and $c_{X,Y} : X \otimes Y \to Y \otimes X$ are
isomorphisms natural in $X$ satisfying $c_{X\otimes Y,Z} = (c_{XZ}
\otimes \id_Y )(\id_X\otimes c_{Y, Z})$ and $c_{I,Y} = \id_Y$, for
all $X,Y,Z \in \ca$. The braided monoidal structure is given in the
following way:
\begin{itemize}
  \item the tensor product is $ (Y, c_{-,Y}) \otimes (Z, c_{-,Z}) =
(Y\otimes Z, c_{-,Y\otimes Z})$, where
$$c_{X,Y \otimes
Z} = (\id_Y \otimes c_{X,Z})(c_{X,Y} \otimes \id_Z) : X \otimes Y
\otimes Z \to Y \otimes Z \otimes X,$$ for all $X \in \ca$,
  \item the identity element is $(I, c_{-,I})$, $c_{Z,I} = \id_Z$
  \item the braiding is the morphism $c_{X,Y}$.
\end{itemize}

Let $H$ be a Hopf algebra with bijective antipode. We shall denote
by $^H\Mo$ the tensor category of left $H$-comodules. The category
$\mathcal{Z}(^H\Mo)$ is braided equivalent to the category
$^H_H\mathcal{YD}$ of left-left Yetter-Drinfeld modules, whose
objects are left $H$-comodules and left $H$-modules $M$ satisfying
the condition
\begin{equation}\label{Yetter-Drinfeld condition}
(h_1\rightharpoonup m)_{-1}h_2\otimes (h_1 \rightharpoonup m)_0 =
h_1m_{-1}\otimes h_2\rightharpoonup m_0
\end{equation}for all $m\in M$, $h\in H$.  A Yetter-Drinfeld module $N$ becomes
an object in $\mathcal{Z}(^H\Mo)$ by \[ c_{M,N}(m\otimes n)=
m_{-1}\rightharpoonup n\otimes m_0,\]and inverse
$c_{M,N}^{-1}(n\otimes m)= m_0\otimes
\mathcal{S}^{-1}(m_1)\rightharpoonup n$.

\section{De-equivariantization of Hopf algebras}\label{section:De-equivariantization}

\subsection{Central inclusion and braided central Hopf subalgebras}

Let $H$ be a Hopf algebra with bijective antipode. Let $G$ be an
affine group scheme over $\ku$ and $\mathcal{O}(G)$ the Hopf algebra
of regular functions over $G$.

A \textbf{central inclusion} of $G$ in $H$ is a braided monoidal inclusion
$\iota: \Rep(G) \hookrightarrow \mathcal{Z}(^H\Mo)\cong \
^H_H\mathcal{YD}$, such that the braiding of $^H_H\mathcal{YD}$
restricts to the usual symmetric braiding of $\Rep(G)$, and the
composition $\Rep(G) \hookrightarrow\ ^H_H\mathcal{YD} \to\ ^H\Mo$
gives an inclusion.

In order to describe in Hopf-theoretical terms the central
inclusions, we need the following concept.


\begin{defi}
Let $H$ be a Hopf algebra. A \emph{braided central Hopf subalgebra}
of $H$ is a pair $(K,r)$, where $K\subset H$ is a Hopf subalgebra,
and $r:H\otimes K\to \ku$ is a bilinear form such that:
\begin{align}
r(hh',k)&=r(h',k_1)r(h,k_2), \label{cuasi-trenza 1}
\\ r(h,kk')&=r(h_1,k)r(h_2,k'), \label{cuasi-trenza 2}
\\ r(h,1)&=\varepsilon(h), \quad  r(1,k)=\varepsilon(k),\label{cuasi-trenza 3}\\
r(h_1,k_1)k_2h_2&=h_1k_1r(h_2,k_2), \label{casi commutativa} \\
r(k,k')&=\varepsilon(kk'), \label{conmutatividad}
\end{align}
for all $k,k'\in K$, $h,h'\in H$.
\end{defi}

\begin{rmk}\label{rmk definicion braided central}

\begin{enumerate}
\item The conditions \eqref{cuasi-trenza 1}, \eqref{cuasi-trenza 2}, \eqref{cuasi-trenza 3},
say that $r:H\otimes K\to \ku$ is a Hopf skew pairing, so in
particular $r$ has a convolution-inverse
$$r^{-1}(h,k)=r(h,S(k)), \  \   \  ( h\in H, k\in K).$$

  \item The algebra $K$ is commutative by \eqref{casi commutativa} and \eqref{conmutatividad}.
  \item For all $V\in\- ^H\Mo, W\in\- ^K\Mo$, the map $r$ defines a
  natural isomorphism $c_{V,W}:V\otimes W \to W\otimes V, v\otimes w\mapsto r(v_{-1},w_{-1})w_0\otimes v_0$ in $^H\Mo$, and  these isomorphisms define a braided inclusion $^K\Mo\to \mathcal{Z}(^H\Mo)=\- ^H_H\mathcal{YD}$.
  \item The condition \eqref{conmutatividad} implies that $K$ is a
  commutative algebra in $\mathcal{Z}(^H\Mo)$.
\end{enumerate}
\end{rmk}

For example, any central Hopf subalgebra $K\subset H$ is braided
central with $r=\varepsilon_H\otimes\varepsilon_K$. Conversely, if
$K\subset H$ is a braided central Hopf subalgebra with
$r=\varepsilon_H\otimes\varepsilon_K$ then $K$ is a central Hopf
subalgebra.
\smallskip

\begin{lema}\label{r factoriza}
Let $K\subset H$ be a braided central Hopf subalgebra. Then
\begin{equation}\label{r pasa al cociente}
r(xh,k)=r(hx,k)=\varepsilon(x)r(h,k),
\end{equation}
for all $x, k\in K,  h\in H$,
\end{lema}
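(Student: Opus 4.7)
The plan is to derive both equalities in \eqref{r pasa al cociente} by a direct computation, using only axiom \eqref{cuasi-trenza 1} (multiplicativity of $r$ in its first argument) together with axiom \eqref{conmutatividad} (triviality of $r$ on $K \otimes K$). The observation that makes the proof immediate is that once the first argument of $r(-,-)$ lies in $K$, condition \eqref{conmutatividad} forces it to reduce to a counit evaluation.

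For the first equality, I would apply \eqref{cuasi-trenza 1} to split $r(xh,k) = r(h,k_1)\,r(x,k_2)$. Since $K$ is a Hopf subalgebra we have $k_2 \in K$, and then \eqref{conmutatividad} gives $r(x,k_2) = \varepsilon(x k_2) = \varepsilon(x)\varepsilon(k_2)$. Plugging back in and using the counit identity for $k$ yields $r(xh,k) = \varepsilon(x)\,r(h,k_1 \varepsilon(k_2)) = \varepsilon(x)\, r(h,k)$. The second equality is symmetric: $r(hx,k) = r(x,k_1)\,r(h,k_2) = \varepsilon(x)\varepsilon(k_1)\,r(h,k_2) = \varepsilon(x)\,r(h,k)$.

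There is no real obstacle here, since the conclusion is essentially a bookkeeping consequence of the axioms; one just has to be careful to use the correct ordering convention in \eqref{cuasi-trenza 1}, namely $r(hh',k) = r(h',k_1)r(h,k_2)$, so that the factor containing $x$ is indeed evaluated against an element of $K$. The lemma will be invoked later to show that $r$ descends to a well-defined pairing $H/HK^+ \otimes K \to \ku$, where $K^+ = \ker\varepsilon|_K$, which is the reason for stating it at this point.
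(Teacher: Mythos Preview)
Your argument is correct and is exactly the approach indicated in the paper, which simply states that the lemma follows from \eqref{cuasi-trenza 1} and \eqref{conmutatividad}; you have merely written out the details of that two-line computation. The only cosmetic point is that the paper writes the quotient as $H/\mathcal{O}(G)^+H$ rather than $H/HK^+$, but since $K$ is commutative this makes no difference.
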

\begin{proof}
It follows from conditions \eqref{cuasi-trenza 1} and
\eqref{conmutatividad}.
\end{proof}

The following result exhibits the relevance of braided central Hopf
subalgebras.

\begin{teo}\label{calculo r matriz}
Let $H$ be a Hopf algebra and $K\subset H$ a commutative Hopf
subalgebra. Then the following set of data are equivalent:

\begin{enumerate}
  \item A map $r:H\otimes K\to \ku$ such that $(K,r)$ is a braided central Hopf subalgebra of $H$.
  \item A braided monoidal functor $F:\ ^K\Mo \to \mathcal{Z}(^H\Mo)=\ ^H_H\mathcal{YD}$ such that
  the composition with the forgetful functor $\mathcal{Z}(^H\Mo)=\ ^H_H\mathcal{YD} \to\ ^H\Mo$ is an inclusion.
  \item A Hopf algebra map $\gamma: K\to (H^\circ)^{cop}$ with
  $\gamma(k)|_{K}=\varepsilon$  and
  $$\langle \gamma( k_1),h_1\rangle k_2h_2=h_1k_1\langle \gamma(k_2),h_2\rangle$$ for all $h \in H, k\in K$
  ( $H^\circ$ denotes the finite dual Hopf algebra).
\end{enumerate}
\end{teo}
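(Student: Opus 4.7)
I would prove the three-way equivalence by showing $(1)\Leftrightarrow(3)$ and $(1)\Leftrightarrow(2)$ separately. For $(1)\Leftrightarrow(3)$ the standard dictionary $\gamma(k)(h):=r(h,k)$ applies and each axiom translates directly: \eqref{cuasi-trenza 1} expresses $\gamma(k)(hh')=\gamma(k_1)(h')\gamma(k_2)(h)$, which with the opposite coproduct on $H^\circ$ is precisely the statement that $\gamma$ is a coalgebra map $K\to(H^\circ)^{\cop}$; \eqref{cuasi-trenza 2} is multiplicativity of $\gamma$; \eqref{cuasi-trenza 3} gives preservation of unit and counit; \eqref{casi commutativa} is the centrality relation in (3); and \eqref{conmutatividad} is the restriction condition $\gamma(k)|_K = \varepsilon$. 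Antipode compatibility is automatic because a bialgebra morphism between Hopf algebras is automatically Hopf.

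For $(1)\Rightarrow(2)$, given $(K,r)$ I define $F:{}^K\Mo\to{}^H_H\YD$ by letting $F(W)=W$ carry the $H$-comodule structure inherited from $K\hookrightarrow H$ together with the $H$-action $h\rightharpoonup w := r(h,w_{-1})w_0$. Conditions \eqref{cuasi-trenza 1} and \eqref{cuasi-trenza 3} make this a unital associative action, \eqref{casi commutativa} is precisely the Yetter--Drinfeld axiom \eqref{Yetter-Drinfeld condition}, and \eqref{cuasi-trenza 2} implies that the action on $W\otimes W'$ is codiagonal, so that $F$ is strict monoidal. The braiding formula in Remark \ref{rmk definicion braided central}(3) is the restriction of the center braiding, and \eqref{conmutatividad} makes $K$ a commutative algebra object in $\mathcal{Z}({}^H\Mo)$; together these ensure $F$ is braided. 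Compatibility with the forgetful functor is built into the construction.

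For $(2)\Rightarrow(1)$, I would first set $r(h,k):=\varepsilon_K(h\rightharpoonup k)$, where the action is the one induced by $F$ on the left regular $K$-comodule $K$. The key step is to recover the action on an arbitrary $W\in{}^K\Mo$ from $r$. For each $f\in W^*$, the map $\phi_f:W\to K$, $w\mapsto f(w_0)w_{-1}$, is a morphism of $K$-comodules and therefore, applying the functor $F$, an $H$-module map. Applying $\varepsilon_K$ to the identity $\phi_f(h\rightharpoonup w)=h\rightharpoonup\phi_f(w)$ and simplifying with the counit axiom yields $f(h\rightharpoonup w)=r(h,w_{-1})f(w_0)$ for every $f\in W^*$, hence $h\rightharpoonup w=r(h,w_{-1})w_0$. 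From this explicit formula the axioms \eqref{cuasi-trenza 1}--\eqref{conmutatividad} are read off from, respectively, associativity of the action on $F(K)$, monoidality of $F$, the Yetter--Drinfeld condition on $F(K)$, and the commutativity of $K$ as an algebra in $\mathcal{Z}({}^H\Mo)$. This last reconstruction is the main obstacle: the pairing $r$ has to be extracted from a single object while the functor is given on the whole category, and the device $\phi_f$ is what propagates the data from $K$ to arbitrary comodules by naturality.
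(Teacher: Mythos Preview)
Your argument is correct; the essential difference from the paper is in how the scalar data is recovered from the functor in item~(2). The paper proves the cycle $(1)\Rightarrow(2)\Rightarrow(3)\Rightarrow(1)$, and for $(2)\Rightarrow(3)$ it argues that every $K$-comodule is a colimit of finite-dimensional subcomodules, so the composite ${}^{K}\Mo\to{}^{H}_{H}\YD\to{}_{H}\Mo$ lands in the locally finite $H$-modules, i.e.\ in ${}^{(H^{\circ})^{\cop}}\Mo$, and the induced comodule-category map produces $\gamma:K\to(H^{\circ})^{\cop}$ by reconstruction. You instead go $(2)\Rightarrow(1)$ directly: you read $r$ off from the $H$-action on the single regular comodule $K$ and then use the $K$-colinear maps $\phi_f(w)=f(w_0)w_{-1}$ together with naturality of $F$ to force $h\rightharpoonup w=r(h,w_{-1})w_0$ on every $W$. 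This is more elementary---it avoids the finite dual and the colimit argument altogether---whereas the paper's route is Tannakian in flavour and delivers the map into $H^{\circ}$ without a separate check. The one detail you might spell out in your $(1)\Leftrightarrow(3)$ dictionary is why $\gamma(k)$ lies in $H^{\circ}$ rather than merely in $H^{*}$: since $k$ sits in a finite-dimensional subcoalgebra $C\subset K$, condition \eqref{cuasi-trenza 1} exhibits $\gamma(k)=r(-,k)$ as a matrix coefficient of the finite-dimensional $H$-module $F(C)$, so $\gamma(k)\in H^{\circ}$.
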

\begin{proof}
(1) $\Rightarrow$ (2) Let $M\in\ ^K\Mo$, then the map
$\rightharpoonup : H\otimes M\to M, h\otimes m\mapsto
r(h,m_{-1})m_0$, defines a structure of $H$-module, that satisfies
the Yetter-Drinfeld compatibility by \eqref{casi commutativa}.

\noindent (2) $\Rightarrow$ (3) Since every comodule is a colimit of
finite dimensional comodules, the image of the monoidal functor
$^K\Mo \to\ ^H_H\mathcal{YD}\to\ _H\Mo$ lives in the tensor subcategory of  $\underline{ _H\Mo}$ of $H$-modules that are colimits of finite dimensional $H$-modules, then the monoidal functor
$^K\Mo \to\ ^H_H\mathcal{YD}\to\ \underline{ _H\Mo}\cong\
^{(H^\circ)^{cop}}\Mo$ induces a unique Hopf algebra map $\gamma:K
\to (H^\circ)^{cop}$ given by
$$h\rightharpoonup m = \langle \gamma(m_{-1}), h\rangle m_0,$$ for
all $h\in H$, $m\in M$ and $M\in \ ^K\Mo$.

It is enough to prove that
$$h_1m_{-2}\langle\gamma(m_{-1}),h_2\rangle\otimes m_0 = m_{-1}h_2 \langle
\gamma(m_{-2}),h_1\rangle\otimes m_0,$$ for all $m\in M$, $M\in \
^K\Mo$. Indeed, \eqref{Yetter-Drinfeld condition} implies that
\begin{align*}
    h_1m_{-2}\langle \gamma(m_{-1},h_2\rangle\otimes m_0 &= h_1m_{-2}\otimes \langle \gamma(m_{-1},h_2\rangle m_0\\
    &=  h_1m_{-2}\otimes h_2\rightharpoonup m_0\\
    &= (h_1\rightharpoonup m)_{-1}h_2\otimes (h_1 \rightharpoonup m)_0\\
    &= m_{-1}h_2\otimes \langle \gamma(m_{-2},h_1\rangle m_0\\
     &= m_{-1}h_2 \langle \gamma(m_{-2}),h_1\rangle\otimes m_0.
\end{align*}

\noindent (3) $\Rightarrow$ (1) The map $r(h,k)= \langle
\gamma(k),h\rangle$ defines a braided central structure over $K$.
\end{proof}
The following result in an immediate consequence of Theorem
\ref{calculo r matriz}.
\begin{cor}
Let $H$ be a Hopf algebra. There exist a bijective correspondence
between central inclusions of $G$ in $H$ and braided central Hopf
subalgebras $K$ of $H$ such that $K\cong\mathcal{O}(G)$ as Hopf
algebras.\qed
\end{cor}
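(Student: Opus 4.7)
The plan is to obtain this as an immediate consequence of Theorem \ref{calculo r matriz}, specifically the equivalence (1)$\Leftrightarrow$(2). Since $\Rep(G) = {}^{\mathcal{O}(G)}\Mo$, a central inclusion of $G$ in $H$ is by definition a braided monoidal inclusion $\iota:{}^{\mathcal{O}(G)}\Mo \hookrightarrow \mathcal{Z}({}^H\Mo)$ whose composition with the forgetful functor to ${}^H\Mo$ is still an inclusion and whose restricted braiding is the symmetric flip of $\Rep(G)$. It therefore suffices to match this data with the functors appearing in Theorem \ref{calculo r matriz}(2) in the case $K \cong \mathcal{O}(G)$.

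For the forward direction, starting from a central inclusion $\iota$ I would compose with the forgetful functor to produce a tensor embedding ${}^{\mathcal{O}(G)}\Mo \hookrightarrow {}^H\Mo$. Tannakian reconstruction then provides a Hopf subalgebra $K \subset H$ with a distinguished isomorphism $K \cong \mathcal{O}(G)$ realizing this embedding as the natural restriction-of-coaction functor; concretely, $K$ is recovered as the subspace spanned by the matrix coefficients of the objects in the essential image, and the tensor structure of $\iota$ forces it to be a Hopf subalgebra. Since $\mathcal{O}(G)$ is commutative, so is $K$. Viewing $\iota$ as a functor $F:{}^K\Mo \to \mathcal{Z}({}^H\Mo)$ as in Theorem \ref{calculo r matriz}(2), that theorem then produces a unique $r$ making $(K,r)$ a braided central Hopf subalgebra of $H$.

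For the reverse direction, given $(K,r)$ with a fixed isomorphism $K \cong \mathcal{O}(G)$, Theorem \ref{calculo r matriz} supplies a braided monoidal functor $F:{}^K\Mo \to \mathcal{Z}({}^H\Mo)$ whose composition with the forgetful functor is the natural inclusion coming from $K \hookrightarrow H$. It only remains to check that its restricted braiding coincides with the symmetric flip of $\Rep(G)$. For $V, W \in {}^K\Mo$ the braiding from Remark \ref{rmk definicion braided central}(3) reads
\[ c_{V,W}(v \otimes w) = r(v_{-1}, w_{-1})\, w_0 \otimes v_0, \]
and since $v_{-1}, w_{-1} \in K$, condition \eqref{conmutatividad} gives $r(v_{-1}, w_{-1}) = \varepsilon(v_{-1}w_{-1}) = \varepsilon(v_{-1})\varepsilon(w_{-1})$, so $c_{V,W}(v \otimes w) = w \otimes v$, as required. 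The two constructions are evidently mutually inverse.

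The main obstacle is the Tannakian step in the forward direction: producing the Hopf subalgebra $K \subset H$ from the tensor embedding ${}^{\mathcal{O}(G)}\Mo \hookrightarrow {}^H\Mo$ and verifying that it is genuinely isomorphic to $\mathcal{O}(G)$. This is a standard consequence of the reconstruction results relating Hopf algebras to their comodule categories, and once it is in hand the remaining work is the bookkeeping delegated to Theorem \ref{calculo r matriz}.
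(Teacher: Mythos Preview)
Your proposal is correct and follows the same route the paper intends: the corollary is marked with a bare \qed{} as an immediate consequence of Theorem~\ref{calculo r matriz}, and what you have written is precisely the natural unpacking of that claim. Your explicit verification of the symmetric braiding via \eqref{conmutatividad} and your identification of the Tannakian reconstruction step (extracting $K\subset H$ from the abstract inclusion ${}^{\mathcal{O}(G)}\Mo\hookrightarrow{}^H\Mo$) fill in exactly the details the paper leaves implicit.
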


\subsection{De-equivariantization of a Hopf algebra by an affine group scheme}

Let $H$ be a Hopf algebra and $G$ be an affine group scheme. Let
$K\subset H$ a braided central Hopf subalgebra with
$K=\mathcal{O}(G)$.

The algebra $\mathcal{O}(G)$ is a commutative algebra in the
symmetric category $\Rep(G)$, and thus a commutative algebra in the
braided tensor category $^H_H\mathcal{YD}$ (see  Remark \ref{rmk
definicion braided central} item (4)). Therefore, the algebra
$\mathcal{O}(G)$ is braided commutative.

We define the de-equivariantization $\ ^H\Mo(G)$ of $\ ^H\Mo$ by
$G$, as the category of $H$-equivariant sheaves on $G$, that is the
category of left $\mathcal{O}(G)$-modules in $^H\Mo$.

Now, the category  $\ ^H_{G}\Mo_{G}$ of $\mathcal{O}(G)$-bimodules
in $^H\Mo$ is a tensor category with the tensor product
$M\otimes_{\mathcal{O}(G)} N$. We shall see in the next proposition
that this tensor product induces a monoidal structure on $\
^H\Mo(G)$.

\begin{prop}
Let $V\in\ ^H\Mo(G)$ with left $\mathcal{O}(G)$-module structure
$\rightharpoonup : \mathcal{O}(G)\otimes V \to V$ and left
$H$-comodule structure $\lambda: V\to \mathcal{O}(G)\otimes V,
v\mapsto v_{-1}\otimes v_0$. The map $\leftharpoonup :V\otimes
\mathcal{O}(G) \to V, v\leftharpoonup x= r(v_{-1},x_1)x_2
\rightharpoonup v_0$, makes $V$ an object  in $\ ^H_{G}\Mo_{G}$.
This rule defines a fully faithful strict monoidal functor from
$^H\Mo(G)$ to $\ ^H_{G}\Mo_{G}$.
\end{prop}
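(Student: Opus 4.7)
The plan is to verify directly that $\leftharpoonup$ defines the claimed bimodule structure in $\,^H\Mo$, and then to deduce fully faithfulness and strict monoidality. I would proceed in four stages: first $H$-colinearity of $\leftharpoonup$, then the right-module axioms, then the bimodule compatibility with $\rightharpoonup$, and finally the functorial and monoidal statements.

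For $H$-colinearity of $\leftharpoonup$, I would compute $\lambda(v \leftharpoonup x)$ by expanding the formula and invoking $H$-colinearity of $\rightharpoonup$ (which holds because $V \in \,^H\Mo(G)$), then compare with the codiagonal coaction $v_{-1} x_1 \otimes (v_0 \leftharpoonup x_2)$. After careful bookkeeping with the iterated Sweedler notation for $\lambda^{(2)}(v)$ and $\Delta^{(2)}(x)$, the identity reduces precisely to the quasi-commutativity axiom \eqref{casi commutativa} applied with $h = v_{-1}$ and $k = x_1$. With colinearity in hand, unitality of $\leftharpoonup$ follows from $r(-,1) = \varepsilon$ (axiom \eqref{cuasi-trenza 3}) and the counit of the coaction, and associativity $(v \leftharpoonup x) \leftharpoonup y = v \leftharpoonup (xy)$ follows by unpacking both sides: the left-hand side uses \eqref{cuasi-trenza 1}, commutativity of $K$ (Remark~\ref{rmk definicion braided central}(2)), and the identity $r(x_2, y_2) = \varepsilon(x_2)\varepsilon(y_2)$ coming from \eqref{conmutatividad} (since $x_2 \in K$), while the right-hand side uses \eqref{cuasi-trenza 2}; both collapse to the same expression.

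For the bimodule compatibility $(x \rightharpoonup v) \leftharpoonup y = x \rightharpoonup (v \leftharpoonup y)$, I would expand the left-hand side using colinearity of $\rightharpoonup$, invoke $y_2 x_2 = x_2 y_2$ in the commutative algebra $K$, split $r(x_1 v_{-1}, y_1)$ via \eqref{cuasi-trenza 1}, and collapse the resulting factor of the form $r(x_i, y_j)$ to $\varepsilon(x_i)\varepsilon(y_j)$ via \eqref{conmutatividad} (using $x_i \in K$); contracting counit factors against Sweedler components then recovers the right-hand side. For fully faithfulness, since $\leftharpoonup$ is determined by $\rightharpoonup$ and the coaction, any $H$-colinear, left-$\mathcal{O}(G)$-linear map is automatically right-$\mathcal{O}(G)$-linear, and conversely, so $\Hom_{\,^H\Mo(G)}(V,W) = \Hom_{\,^H_G\Mo_G}(V,W)$.

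Strict monoidality holds because the bimodule tensor product $V \otimes_{\mathcal{O}(G)} W$ in $\,^H_G\Mo_G$ coincides with the natural tensor product in $\,^H\Mo(G)$: the underlying left-$\mathcal{O}(G)$-module in $\,^H\Mo$ is the same (the coequalizer of $V \otimes \mathcal{O}(G) \otimes W \rightrightarrows V \otimes W$), and a direct calculation using \eqref{cuasi-trenza 1} shows that the right action on the bimodule coming from $W$ agrees with the one produced by our formula applied to the codiagonal coaction. Conceptually, everything is the concrete manifestation of the general fact that a left module over a braided-commutative algebra in a braided category inherits a canonical bimodule structure via the half-braiding, where the algebra here is $\mathcal{O}(G)$ in $\mathcal{Z}(\,^H\Mo)$ (Remark~\ref{rmk definicion braided central}(4)). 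The main obstacle is the Sweedler bookkeeping in the colinearity and bimodule compatibility steps, where both identities reduce cleanly to a single axiom from the definition of braided central Hopf subalgebra.
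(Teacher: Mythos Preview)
Your proposal is correct and follows essentially the same route as the paper: the same four verifications (right-module axioms, $H$-colinearity, bimodule compatibility, and the tensor-product check) reducing to the same axioms \eqref{cuasi-trenza 1}--\eqref{conmutatividad}, with fully faithfulness handled exactly as the paper does. The only differences are cosmetic---you verify colinearity before associativity rather than after, and you close with the conceptual remark about braided-commutative algebras in $\mathcal{Z}(\,^H\Mo)$, which the paper leaves implicit.
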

\begin{proof}Let $V\in\ ^H\Mo(G)$ with left $\mathcal{O}(G)$-module
structure $\rightharpoonup : \mathcal{O}(G)\otimes V \to V$ and left
$H$-comodule structure $\lambda: V\to \mathcal{O}(G)\otimes V,
v\mapsto v_{-1}\otimes v_0$.

 (1) The map $\leftharpoonup :V\otimes \mathcal{O}(G) \to
V$ defines a right $\mathcal{O}(G)$-module structure: for any $v\in
V$ and $x,y\in \mathcal{O}(G)$,
\begin{align*}
(v\leftharpoonup x)\leftharpoonup y &= (r(v_{-1},x_1)x_2 \rightharpoonup v_0)\leftharpoonup y \\
 &= r(v_{-1},x_1)r((x_2 \rightharpoonup v_0)_{-1},y_1)y_2\rightharpoonup  (x_2 \rightharpoonup v_0)_0\\
 &= r(v_{-2},x_1)r(x_2 v_{-1},y_1)y_2\rightharpoonup  (x_3 \rightharpoonup v_0)\\
 &= r(v_{-2},x_1)r( v_{-1},y_1)y_2x_2 \rightharpoonup v_0\\
 &= r(v_{-1},y_1x_1)y_2x_2 \rightharpoonup v_0\\
 &= v \leftharpoonup (xy),
\\    v\leftharpoonup 1 &= r(v_{-1},1)v_0= \varepsilon(v_{-1})v_0= v.
\end{align*}

(2) The map $\leftharpoonup :V\otimes \mathcal{O}(G) \to V$ is a
morphism in $^H\Mo$:
\begin{align*}
    (v\leftharpoonup x)_{-1}\otimes (v\leftharpoonup x)_0 &= r(v_{-2},x)v_{-1}\otimes
    v_0,\\
    v_{-1}x_{1}\otimes v_0\leftharpoonup x_2 &= v_{-2}x_{1}\otimes r(v_{-1},x_{2})v_0\\
     &= r(v_{-1},x_{2}) v_{-2}x_{1}\otimes v_0\\
     &= r(v_{-2},x_{1}) x_{2}v_{-1}\otimes v_0\\
     &= r(v_{-2},x_{1}) \varepsilon(x_{2})v_{-1}\otimes v_0\\
     &= r(v_{-2},x) v_{-1}\otimes v_0.
\end{align*}

(3) The maps $\leftharpoonup$ and $\rightharpoonup$ commute:
\begin{align*}
    (x\rightharpoonup v)\leftharpoonup y &= r((x\rightharpoonup
    v)_{-1},y_1)y_2 \rightharpoonup (x\rightharpoonup v)_0\\
    &= r(x_1v_{-1},y_1)y_2 \rightharpoonup x_2\rightharpoonup v_0\\
    &= \varepsilon(x_1) r(v_{-1},y_1)y_2 \rightharpoonup x_2\rightharpoonup v_0\\
    &=  r(v_{-1},y_1)y_2 \rightharpoonup x\rightharpoonup v_0\\
    &= x\rightharpoonup (v\leftharpoonup y).
\end{align*}

(4) Let $f: V\to W$ a morphism in $\ ^H\Mo(G)$,  to see that $f:
V\to W$ is a morphism in $\ ^H_{G}\Mo_{G}$ is enough to prove that
$f$ is a right $\mathcal{O}(G)$-module morphism,
\begin{align*}
f(v \leftharpoonup x)= f(r(v_1,x_1)x_2 \rightharpoonup v_0) &=
r(v_1,x_1)x_2 \rightharpoonup f(v_0)\\ &= r(f(v)_{-1},x_1)x_2
\rightharpoonup f(v)_0\\
&= f(v)\leftharpoonup x.
\end{align*}for all $x\in \mathcal{O}(G), v\in V$.

Therefore we have a well-defined fully faithful functor from $\
^H\Mo(G)$  to $\ ^H_{G}\Mo_{G}$.

Let $V, W \in \ ^H\Mo(G)$, $v\in V, w\in W, x\in \mathcal{O}(G)$,
the calculation
\begin{align*}
r((v\otimes_{\mathcal{O}(G)} w)_{-1},x_1)x_2 \rightharpoonup
(v\otimes_{\mathcal{O}(G)} w)_{0} &=r(v_{-1}w_{-1},x_1) (x_2
\rightharpoonup
v_{0})\otimes_{\mathcal{O}(G)} w_{0}\\
&=r(w_{-1},x_1)r(v_{-1},x_2) (x_3 \rightharpoonup
v_{0})\otimes_{\mathcal{O}(G)} w_{0}\\
&=r(w_{-1},x_1) (v_0 \leftharpoonup x_{2})\otimes_{\mathcal{O}(G)}
w_{0}\\
&=r(w_{-1},x_1) v_0 \otimes_{\mathcal{O}(G)}x_2 \rightharpoonup
w_{0}\\
&=v\otimes_{\mathcal{O}(G)} (w\leftharpoonup x).
\end{align*}
proves that the right $\mathcal{O}(G)$-action of
$V\otimes_{\mathcal{O}(G)} W$ is  induced by the left
$\mathcal{O}(G)$-action; in other words, $\otimes_{\mathcal{O}(G)}$
defines a monoidal structure on $\ ^H\Mo(G)$ such that $\ ^H\Mo(G)$
is a tensor subcategory of $\ ^H_{G}\Mo_{G}$.
\end{proof}

\begin{defi}
Let $H$ be a Hopf algebra and $G$ an affine group scheme, with a
central inclusion of $G$ in $H$. The category $\ ^H\Mo(G)$ with the
monoidal structure $\otimes_{\mathcal{O}(G)}$ is called the
\emph{de-equivariantization} of $H$ by $G$.
\end{defi}

\subsection{Tannakian reconstruction of $\ ^H\Mo(G)$}

Let $H$ be a Hopf algebra and $\mathcal{O}(G)\subset H$ be a braided
central inclusion of $G$ in $H$. We shall say that the central
inclusion of $G$ in $H$ is \emph{cleft} if there exists a
convolution invertible $\mathcal{O}(G)$-linear map $\pi:H\to
\mathcal{O}(G)$; such a map is called a \emph{cointegral}.

Lemma \ref{r factoriza} implies that  $r : H
\otimes \mathcal{O}(G) \to \ku$ induces a well-defined map
$$ H/\mathcal{O}(G)^+H\otimes \mathcal{O}(G) \to \ku,\qquad \overline{h}\otimes k \mapsto r(h, k),$$
which we will denote again by $r$ (here,
$\mathcal{O}(G)^+=\ker(\varepsilon)$ is the augmentation ideal). The
goal of this section is to prove the following result.

\begin{teo}\label{main result}
Let $H$ be a Hopf algebra and $(\mathcal{O}(G),r)$ a cleft braided
central Hopf subalgebra with cointegral $\pi$ such that
$\varepsilon\pi=\varepsilon$ and $\pi(1)=1$. Then the quotient
coalgebra $Q:=H/\mathcal{O}(G)^+H$ is a coquasi-bialgebra with
multiplication and associator given by:
\begin{align}\label{multiplicacion coquasi}
m(a\otimes b) &=  \overline{j(a_{1})
    j(b)_{1}} r(a_{2},
    \pi(j(b)_{2}))
\\\label{associator coquasi}
\omega(a\otimes b\otimes c) &= r(a,\pi(j(b)_{1}j(c)_{1}))
  r(j(b)_2,\pi(j(c)_2)),
\end{align}
where $a,b,c\in Q$ and $j:Q\to H$, $q\mapsto \pi^{-1}(q_1)q_2$.
There is a monoidal equivalence between $\ ^H\Mo(G)$ and $^Q\Mo$.
\end{teo}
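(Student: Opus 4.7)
The plan is to construct an explicit equivalence $F : {}^H\Mo(G) \to {}^Q\Mo$, then transport the monoidal structure from the source (where the tensor product $\otimes_{\mathcal{O}(G)}$ is strictly associative on bimodules) to the target, where a non-trivial associator appears precisely because the section $j : Q \to H$ is a coalgebra map but not an algebra map. I would first show that, thanks to the normalisations $\pi(1)=1$, $\varepsilon\pi=\varepsilon$ and the $\mathcal{O}(G)$-linearity and convolution-invertibility of $\pi$, the formula $j(q)=\pi^{-1}(q_1)q_2$ is well defined on $Q$, satisfies $p\circ j = \id_Q$ for $p:H\to Q$ the projection, and is a coalgebra map. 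The cleft assumption then gives a bijection $\mathcal{O}(G)\otimes Q \to H$, $x\otimes q\mapsto x\cdot j(q)$, which is the key structural input.

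Next, I would define $F(V):=V/\mathcal{O}(G)^+V$ with the $Q$-coaction induced from the $H$-coaction on $V$. The fact that the coaction descends uses that $\mathcal{O}(G)$ is a Hopf subalgebra and the $H$-colinearity of the $\mathcal{O}(G)$-action: writing $\Delta(x)=x_1\otimes x_2$ for $x\in\mathcal{O}(G)^+$ as $(x_1-\varepsilon(x_1))\otimes x_2 + 1\otimes x$, each term lands in $\mathcal{O}(G)^+H\otimes V + H\otimes\mathcal{O}(G)^+V$. The quasi-inverse $G(W):=\mathcal{O}(G)\otimes W$ is free as a left $\mathcal{O}(G)$-module, with $H$-coaction $\lambda(x\otimes w)=x_1 j(w_{-1})\otimes(x_2\otimes w_0)$, coassociative because $j$ is a coalgebra map. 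Then $F\circ G\cong \id$ follows from $\pi\circ j=\varepsilon$, and $G\circ F\cong \id$ follows because the cleft isomorphism $H\cong\mathcal{O}(G)\otimes Q$ forces every object of ${}^H\Mo(G)$ to be a free $\mathcal{O}(G)$-module whose fiber is $F(V)$.

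Finally, I would transport $\otimes_{\mathcal{O}(G)}$ through $F$. Using the formula $v\leftharpoonup x = r(v_{-1},x_1)\,x_2\rightharpoonup v_0$ from the previous proposition, the relation $v\otimes_{\mathcal{O}(G)}(j(b)\rightharpoonup w) = (v\leftharpoonup j(b))\otimes_{\mathcal{O}(G)} w$, and the skew-pairing identities \eqref{cuasi-trenza 1}--\eqref{casi commutativa}, one rewrites $j(a)j(b)\in H$ as $j(a_1)j(b)_1\cdot\pi\text{-part}$ and projects back to $Q$; this yields exactly the multiplication formula \eqref{multiplicacion coquasi}. The associator $\omega$ is then extracted by comparing $(V\otimes_{\mathcal{O}(G)}W)\otimes_{\mathcal{O}(G)}Z$ and $V\otimes_{\mathcal{O}(G)}(W\otimes_{\mathcal{O}(G)}Z)$ (identical on the nose in ${}^H\Mo(G)$) after transport through $F$: the failure of $j(a)j(b)j(c)$ to be parenthesis-independent, measured by the $r$-scalar \eqref{associator coquasi}, is forced to be the associator. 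The coquasi-bialgebra axioms \eqref{asociat multipl}--\eqref{cocycle omega} are then automatic: they are the coherence conditions on ${}^Q\Mo$ inherited from the strict bimodule category via $F$.

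\textbf{Main obstacle.} The serious computational work lies in the third paragraph: carefully tracking the $r$-scalars that emerge when rewriting $j(a)j(b)$ and $j(a)j(b)j(c)$ using \eqref{casi commutativa} to move $\mathcal{O}(G)$-factors across $H$-elements inside the relative tensor product, and verifying that the two possible rebracketings differ by exactly the prescribed $\omega$ rather than by some other cocycle. Once the equivalence $F,G$ is in place and $j$ is shown to be a coalgebra section, every other check is formal coalgebra/comodule bookkeeping.
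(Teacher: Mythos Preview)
Your overall strategy---construct the equivalence $V\mapsto V/\mathcal{O}(G)^+V$ and transport $\otimes_{\mathcal{O}(G)}$ to ${}^Q\Mo$---is exactly the paper's. The paper packages the transport as Tannakian reconstruction: it cites \cite{DMR} and \cite{Schneider} for the abelian equivalence (with quasi-inverse $V\mapsto H\square_Q V$), invokes \cite[Lemma~3.3.5]{Sch2} for the explicit linear isomorphism $\xi^\pi_{M,N}:\overline M\otimes\overline N\to\overline{M\otimes_{\mathcal{O}(G)}N}$ making the forgetful functor quasi-tensor, and then reads off $m$ and $\omega$ from the reconstruction formulas \eqref{reconstruccion multiplicacion}--\eqref{reconstruccion asociador}. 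Your ``third paragraph'' is that computation, and you are right that it is where the work lies.

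There is, however, a genuine gap in your second paragraph. You assert that $j$ is a coalgebra map and use this for the coassociativity of the $H$-coaction on $G(W)=\mathcal{O}(G)\otimes W$; but coassociativity of your $\lambda$ is \emph{equivalent} to $\Delta_Hj=(j\otimes j)\Delta_Q$, and nothing in the hypotheses on $\pi$ ($\mathcal{O}(G)$-linear, convolution-invertible, normalised) forces this. Already in the pointed examples of \S\ref{subseccion ejemplo} it fails: e.g.\ $j(\overline{\gamma^{n-1}x_1})=\gamma^{n-1}x_1$ has $\Delta_H$-image with second tensorand $\gamma^n$, whereas $(j\otimes j)\Delta_Q$ produces $j(\overline{\gamma^n})=1$ there. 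In fact the formulas \eqref{multiplicacion coquasi}--\eqref{associator coquasi} are deliberately written in terms of $j(b)_1,j(b)_2$ (that is, $\Delta_H$ applied \emph{after} $j$) precisely because these differ from $j(b_1),j(b_2)$; were $j$ a coalgebra map one would have $\pi(j(b)_2)=\pi(j(b_2))=\varepsilon(b_2)1$ and the $r$-factor in \eqref{multiplicacion coquasi} would collapse. The paper avoids the issue entirely by taking the quasi-inverse to be the cotensor product $H\square_Q W$, whose $H$-coaction comes from the left tensorand and is coassociative for free; the cleft hypothesis then enters only to identify $H\square_Q W\cong\mathcal{O}(G)\otimes W$ as $\mathcal{O}(G)$-modules and to write down $\xi^\pi$. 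With that substitution your outline goes through.
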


Before to give the proof of the Theorem, we want to explain briefly
the Tannakian reconstruction principle that we shall use. Let
$C$ be a coalgebra and $^C\Mo$ be the category of left
$C$-comodules. Assume that $^C\Mo$ has a monoidal structure
$\overline{\otimes}:\ ^C\Mo\times\ ^C\Mo\to\ ^C\Mo$,
$\alpha_{V,W,Z}: (V\overline{\otimes} W)\overline{\otimes} Z \to
V\overline{\otimes} (W\overline{\otimes} Z)$, $1\overline{\otimes}
V= V\overline{\otimes} 1$ such that the underlying functor $^C\Mo\to
\text{Vect}_{\ku}$ is a strict quasi-monoidal functor, \emph{i.e.},
$V\overline{\otimes} W= V\otimes_k W$ and $1=k$ as vector spaces,
then $C$ has a coquasi-bialgebra structure $(m, \omega)$ given by
\begin{align}\label{reconstruccion multiplicacion}
    m(a,b)= (a\otimes b)_{-1}\varepsilon((a\otimes b)_0),\\
    \omega(a,b,c)=\varepsilon(\alpha(a\otimes b\otimes c)) \label{reconstruccion asociador}
\end{align}
and the monoidal structure on $^C\Mo$ defined by the
coquasi-bialgebra structure coincides with the monoidal structure
$(\overline{\otimes}, \alpha, 1)$.

\begin{proof}

From now on we fix a cointegral $\pi$ such that
$\varepsilon\pi=\varepsilon$ and $\pi(1)=1$. Let $Q=
H/\mathcal{O}(G)^+H$ be the quotient coalgebra of $H$, then by
\cite[Theorem 2.4]{DMR} and \cite[Theorem II]{Schneider} the
functors
\begin{align}\label{equivalencia coalgebra de-equi}
    \widehat{\mathcal{V}}:\ ^H\Mo(G) &\to\- ^Q\Mo\\
                              M &\mapsto \overline{M}= M/K^+M\\
                              H\square_Q V &\mapsfrom V,
\end{align}
define a category equivalence, where $\overline{M}=
M/\mathcal{O}(G)^+M$ is a left $Q$-comodule with
$\overline{m}_{-1}\otimes \overline{m}_0= \overline{m_{-1}}\otimes
\overline{m_0}$, and $H\square_Q V=\{\sum h\otimes v| \sum
h_1\otimes h_2\otimes v = \sum h\otimes v_{-1}\otimes v_0\}\in \
^H\Mo(G)$ has as left $H$-comodule and a left
$\mathcal{O}(G)$-module structures the ones induced by the left
tensor factor.

By \cite[Lemma 3.3.5]{Sch2}, for all $M, N\in\ ^H\Mo(G)$ we have a
linear isomorphism
\begin{align*}
    \xi_{M,N}^\pi:\overline{M}\otimes \overline{N} &\to \overline{M\otimes_{\mathcal{O}(G)} N}\\
                  \overline{m}\otimes \overline{n}&\mapsto
                  \overline{m\otimes_{\mathcal{O}(G)} \pi^{-1}(n_{-1})n_0}\\
                  r(m_{-1},\pi(n_{-1})_{1})\overline{\pi(n_{-1})_2m_0}\otimes \overline{n_0}=\overline{m\pi(n_{-1})}\otimes \overline{n_{0}} &\mapsfrom \overline{m\otimes_{\mathcal{O}(G)} n}
\end{align*}
such that the functor $(\mathcal{V},\xi^\pi):\ ^H\Mo(G)\to
\Vect_\ku$, $M\mapsto \overline{M}:= M/K^+M$ is  quasi-tensor. Then
using the equivalence $\eqref{equivalencia coalgebra de-equi}$, the
category $^Q\Mo$ has a (unique) monoidal structure such the
$\widehat{\mathcal{V}}$ is a monoidal equivalence and the following
diagram of functors commutes
$$
\begin{diagram}
\node{^H\Mo(G)} \arrow{se,r}{(\mathcal{V},\xi)}
\arrow[2]{e,l}{\widehat{\mathcal{V}}} \node{} \node{^Q\Mo} \arrow{sw,r}{\mathcal{U}}\\
\node{}\node{\text{\Vect}_\ku}\node{}
\end{diagram}
$$
Consequently the underlying functor $\mathcal{U}$ becomes an strict
quasi-monoidal functor and we can apply Tannakian
reconstruction.

A natural section $j:Q\to H$ for the canonical projection $\nu_H:
H\to Q$ is given by
\begin{equation}\label{eq:section}
j(\overline{h})=\pi^{-1}(h_{-1})h_{0}.
\end{equation}

Fix a cointegral $\pi: H\to \mathcal{O}(G)$ such that $\pi(1)=1$,
and define $j$ as in \eqref{eq:section}.

The $Q$-comodule structure on $Q\otimes Q$ is:
\begin{align*}
    (\overline{m}\otimes \overline{n})_{-1}\otimes (\overline{m}\otimes
    \overline{n})_0 &= \xi(\overline{m}\otimes
    \overline{n})_{-1}\otimes \xi^{-1}(\xi(\overline{m}\otimes
    \overline{n})_0)\\
     &= (\overline{m}\otimes j(\overline{n}))_{-1}\otimes \xi^{-1}(\overline{m}\otimes
     j(\overline{n}))\\
     &= \overline{m_{-1}j(\overline{n})_{-1}}\otimes \xi^{-1}(\overline{m}_0\otimes
     j(\overline{n})_0)\\
     &= \overline{m_{-1}j(\overline{n})_{-1}}\otimes r(\overline{m}_{0,-1},\pi(j(\overline{n})_{0,-1})_1)
     \\ & \qquad \quad \overline{\pi(j(\overline{n})_{0,-1})_2m}_{00} \otimes
     j(\overline{n})_{00}\\
     &= \overline{m_{-2}j(\overline{n})_{-2}}\otimes r(\overline{m}_{-1},
     \pi(j(\overline{n})_{-1})_1)
     \\ & \qquad \quad \overline{\pi(j(\overline{n})_{-1})_2m}_{0} \otimes
     j(\overline{n})_{0}\\
      &= \overline{j(\overline{m}_{-2})j(\overline{n})_{-2}}\otimes r(\overline{m}_{-1},
     \pi(j(\overline{n})_{-1})_1)
     \\ & \qquad \quad \overline{\pi(j(\overline{n})_{-1})_2m}_{0} \otimes
     j(\overline{n})_{0},
\end{align*}
for all $m,n \in H$, $\overline{m}, \overline{n}\in Q$. Now,
applying the formula \eqref{reconstruccion multiplicacion}, we have
\begin{align*}
    m(\overline{m}\otimes \overline{n}) &= \overline{j(\overline{m}_{-2})
    j(\overline{n})_{-2}} r(\overline{m}_{-1},
    \pi(j(\overline{n})_{-1})_1)\varepsilon(\overline{\pi(j(\overline{n})_{-1})_2m}_{0})
    \varepsilon( j(\overline{n})_{0})\\
    &= \overline{j(\overline{m}_{-2})
    j(\overline{n})_{-1}} r(\overline{m}_{-1},
    \pi(j(\overline{n})_{0})_1)\varepsilon(\overline{\pi(j(\overline{n})_{0})_2m}_{0})\\
    &= \overline{j(\overline{m}_{-1})
    j(\overline{n})_{-1}} r(\overline{m}_{0},
    \pi(j(\overline{n})_{0})),
\end{align*}that is $$m(a\otimes b)=  \overline{j(a_{1})
    j(b)_{1}} r(a_{2},
    \pi(j(b)_{2})).$$

The constraint of associativity of $^Q\Mo$, is defined by the
commutativity of the diagram
$$
\begin{diagram}
\node{\overline{L}\otimes \overline{M}\otimes \overline{N}}
\arrow{s,l}{\xi_{L,M}\otimes\id}
\arrow[2]{e,l}{\alpha_{\overline{L},\overline{M},\overline{N}}}
\node{}
\node{\overline{L}\otimes \overline{M}\otimes \overline{N}} \arrow{s,r}{\id\otimes \xi_{M,N}}\\
\node{\overline{L\otimes_{\mathcal{O}(G)}M}\otimes \overline{N}}
\arrow{se,r}{\xi_{L\otimes_{\mathcal{O}(G)} M,
N}}\node{}\node{\overline{L}\otimes
\overline{M\otimes_{\mathcal{O}(G)}N}}
\arrow{sw,r}{\xi_{L,M\otimes_{\mathcal{O}(G)}N}}\\
\node{}\node{\overline{L\otimes_{\mathcal{O}(G)}
M\otimes_{\mathcal{O}(G)}N}}\node{}
\end{diagram}
$$

Hence,
\begin{align*}
    \alpha(\overline{l}\otimes \overline{m}\otimes \overline{n}) &=
    \id\otimes\xi^{-1}_{M,N}\circ \xi^{-1}_{L,M\otimes_{\mathcal{O}(G)}N} \circ \xi_{L\otimes_{\mathcal{O}(G)}M,N}\circ \xi_{L,M}\otimes\id
    (\overline{l}\otimes \overline{m}\otimes \overline{n})\\
  &=  \id\otimes\xi^{-1}_{M,N}\circ
  \xi^{-1}_{L,M\otimes_{\mathcal{O}(G)}N} (\overline{l\otimes j(\overline{m})\otimes
  j(\overline{n})})\\
 &=  \id\otimes\xi^{-1}_{M,N}(r(l_{-1},\pi((j(\overline{m})\otimes
  j(\overline{n}))_{-1})_{1})
  \\ & \qquad \quad \overline{\pi((j(\overline{m})\otimes
  j(\overline{n}))_{-1})_{2}l_0}\otimes \overline{j(\overline{m})_0}\otimes
  \overline{j(\overline{n})_0})\\
  &= r(l_{-1},\pi(j(\overline{m})_{-1}j(\overline{n})_{-1})_{1}) \id\otimes
  \\ & \qquad \quad\xi^{-1}_{M,N}(  \overline{\pi(j(\overline{m})_{-1}
  j(\overline{n})_{-1})_{2}l_0}\otimes\overline{j(\overline{m})_0}\otimes
  \overline{j(\overline{n})_0})\\
  &= r(l_{-1},\pi(j(\overline{m})_{-1}j(\overline{n})_{-1})_{1})
  r(j(\overline{m})_{0,-1},\pi(j(\overline{n})_{0,-1})_1)
  \\ & \qquad \quad
  \overline{\pi(j(\overline{m})_{-1}
  j(\overline{n})_{-1})_{2}l_0}\otimes
  \overline{ \pi(j(\overline{n})_{0,-1})_2 j(\overline{m})_{00}}\otimes
  \overline{j(\overline{n})_{00}}\\
  &= r(l_{-1},\pi(j(\overline{m})_{-2}j(\overline{n})_{-2})_{1})
  r(j(\overline{m})_{-1},\pi(j(\overline{n})_{-1})_1)
  \\ & \qquad \quad
  \overline{\pi(j(\overline{m})_{-1}
  j(\overline{n})_{-1})_{2}l_0}\otimes
  \overline{ \pi(j(\overline{n})_{-1})_2 j(\overline{m})_{0}}\otimes
  \overline{j(\overline{n})_{0}}
\end{align*}
for all $l\in L,m\in M,n\in N $. Applying the formula
\eqref{reconstruccion asociador},

\begin{align*}
    \omega(\overline{l}\otimes \overline{m}\otimes \overline{n})
    &= r(l_{-1},\pi(j(\overline{m})_{-2}j(\overline{n})_{-2})_{1})
  r(j(\overline{m})_{-1},\pi(j(\overline{n})_{-1})_1)\\
   &
  \varepsilon(\overline{\pi(j(\overline{m})_{-1}
  j(\overline{n})_{-1})_{2}l_0}) \varepsilon(
  \overline{ \pi(j(\overline{n})_{-1})_2
  j(\overline{m})_{0}})\varepsilon(\overline{j(\overline{n})_{0}})\\
  &= r(l,\pi(j(\overline{m})_{-1}j(\overline{n})_{-1}))
  r(j(\overline{m})_0,\pi(j(\overline{n})_0)),
\end{align*}that is $$\omega(a\otimes b\otimes c)=
r(a,\pi(j(b)_{1}j(c)_{1}))
  r(j(b)_2,\pi(j(c)_2))$$for all $a,b,c\in Q$.

\end{proof}

\begin{rmk}
\begin{enumerate}
  \item If $\pi:H\to K$ is any integral then
$\pi'(h):=\pi(h_1)\varepsilon\pi^{-1}(h_2) $ is again an integral
such that $\varepsilon\pi'=\varepsilon$.
\item If $\pi:H\to K$ is an integral, then $\pi(1)\in
H^\times$, and $\pi'(h):= \pi(h)/\pi(1)$ is again an integral such
that $\pi'(1)=1$.
  \item If $H$ is finite dimensional Hopf algebra $H$, every  Hopf
  subalgebra $K\subset H$ admits an integral $\pi: H\to K$.
\end{enumerate}

\end{rmk}

\begin{prop}
If $H$ is finite dimensional and $G$ is a constant finite algebraic
group, then the coquasi-bialgebra $Q$ defined in Theorem \ref{main
result} admits a coquasi-Hopf algebra structure.
\end{prop}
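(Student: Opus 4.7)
The plan is to upgrade the coquasi-bialgebra $Q$ to a coquasi-Hopf algebra via the same Tannakian reconstruction used in the proof of Theorem \ref{main result}: once the monoidal category $\ ^Q\Mo_{\text{fin}} \simeq \ ^H\Mo(G)_{\text{fin}}$ is shown to be rigid, the coquasi-bialgebra structure automatically extends to a coquasi-Hopf algebra structure, the antipode $\mathcal{S}$ together with $\alpha,\beta\in Q^*$ being read off from the left and right duality data of the reconstructed tensor category.

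Since $H$ is finite dimensional, the tensor category $\ ^H\Mo_{\text{fin}}$ is finite and rigid, and $Q=H/\mathcal{O}(G)^+H$ is finite dimensional. For a constant finite algebraic group $G$ with underlying finite group $\Gamma$, the Hopf algebra $\mathcal{O}(G)\cong\ku^{\Gamma}$ is finite dimensional, and Remark \ref{rmk definicion braided central}(4) shows that it is a commutative algebra object in the braided category $\mathcal{Z}(\ ^H\Mo)\cong\ ^H_H\YD$. The two-sided integral $\int\colon\mathcal{O}(G)\to\ku$, $f\mapsto\sum_{x\in\Gamma}f(x)$, provides a Frobenius form $\int\circ m\colon\mathcal{O}(G)\otimes\mathcal{O}(G)\to\ku$; using \eqref{cuasi-trenza 1}--\eqref{conmutatividad} one verifies that $\int$ is a morphism in $\mathcal{Z}(\ ^H\Mo)$, so that $\mathcal{O}(G)$ becomes a commutative Frobenius algebra object in $\mathcal{Z}(\ ^H\Mo)$.

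The core categorical input is then the standard fact that, for any rigid tensor category $\mathcal{C}$ and any commutative Frobenius algebra $A$ in $\mathcal{Z}(\mathcal{C})$, the category of $A$-modules in $\mathcal{C}$ with tensor product $\otimes_A$ is itself rigid: a dual of an $A$-module $V$ is $V^\vee=\uhom_A(V,A)$ with the $A$-action induced from the target, and the evaluation and coevaluation morphisms are assembled from the duality of $\mathcal{C}$ together with the Frobenius form on $A$ and its inverse. Applied with $A=\mathcal{O}(G)$ and $\mathcal{C}=\ ^H\Mo_{\text{fin}}$, this yields rigidity of $\ ^H\Mo(G)_{\text{fin}}$; transporting along the equivalence of Theorem \ref{main result} gives rigidity of $\ ^Q\Mo_{\text{fin}}$, and the Tannakian reconstruction then endows $Q$ with a (necessarily unique) coquasi-Hopf algebra structure refining its coquasi-bialgebra structure.

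The main obstacle is the rigidity of $A\text{-mod}(\mathcal{C})$ for a commutative Frobenius $A$ in $\mathcal{Z}(\mathcal{C})$; this is standard in the theory of (de)-equivariantization of finite tensor categories (cf.\ \cite{DGNO}) but its Hopf-theoretic translation requires care. An alternative, more computational route would be to write down explicit candidates for $\mathcal{S}$, $\alpha$ and $\beta$ on $Q$ directly in terms of the section $j$ of \eqref{eq:section}, the cointegral $\pi$, the pairing $r$ and the antipode $S$ of $H$, and to verify \eqref{SalfaId}--\eqref{omega anihileaza S} using \eqref{cuasi-trenza 1}--\eqref{conmutatividad}; this avoids external references but is considerably more involved.
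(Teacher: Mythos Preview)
Your approach is essentially the paper's: the paper invokes \cite[Theorem 4.18]{DGNO} directly for rigidity of the de-equivariantization (rather than unpacking the Frobenius-algebra argument you sketch), and then cites \cite[Theorem 3.1]{Sch3} for the step ``$Q$ finite dimensional with $^Q\Mo$ rigid $\Rightarrow$ $Q$ is coquasi-Hopf'', which is precisely the Tannakian reconstruction you appeal to. Your Frobenius presentation of the first step and your mention of the alternative explicit-formulas route add useful context, but the logical skeleton is the same.
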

\begin{proof}
Since $G$ is a constant finite group it follows  by \cite[Theorem
4.18]{DGNO} that the de-equivariantization is a rigid monoidal
category. Since $Q$ is a quotient of $H$, $Q$ is a  finite
dimensional and by \cite[Theorem 3.1]{Sch3}, $Q$ is a coquasi-Hopf
algebra.

\end{proof}

\section{Applications}\label{section:applications}

In the last part of this work we will apply the results of the Section
\ref{section:De-equivariantization} to some particular cases. First we consider the
category of $G$-graded vector spaces, for some group $G$. Second, we
look at quotient of Hopf algebras by central Hopf subalgebras, and
view them as a de-equivariantization. Finally we study a family of
pointed finite-dimensional coquasi-Hopf algebras, whose dual
algebras are a generalization of the quasi-Hopf algebras $A(H,s)$ in
\cite{A}.

\subsection{Baby example}\label{subsection:baby example}

Let $\Gamma$ be a discrete group, $G\subset \mathcal{Z}(\Gamma)$ a
central subgroup of $\Gamma$, and $r: \Gamma\times G\to \ku^*$ a
bicharacter such that $r|_{G\times G}=1$. Then the pair $(\ku G, r)$
is a braided central Hopf subalgebra of $\ku \Gamma$.

We shall fix a set of representatives of the right cosets of $G$ in
$\Gamma$, $Q \subset G$. Thus every element $\gamma\in \Gamma$ has a
unique factorization $\gamma= gq$, $g\in G$,  $q\in Q$. We assume
$e\in Q$. The uniqueness of the factorization $\Gamma = GQ$ implies
that there are well defined maps
$$\cdot:Q\times Q \to Q,\qquad \theta:Q\times Q\to G,$$
determined by the conditions
$$ pq=\theta(p,q)p\cdot q,\qquad p,q \in Q.$$
The map $\theta$ is a 2-cocycle $\theta\in Z^2(\Gamma/G, G)$ where
$\Gamma/G$ acts trivially over $G$, since $G$ is a central subgroup
of $\Gamma$.

We define a map $\pi:\Gamma\to G$, $\gamma\mapsto x$, where $x\in G$
is the unique element such that $\gamma=xp$ with $p\in Q$, and
$j:Q\to \Gamma$ is the inclusion. Now by Theorem \ref{main result},
the de-equivariantization is defined as follows. Let $K$ be the
quotient group $\Gamma/G$, then the group algebra $\ku K$ with the
3-cocycle
$$\omega(u,v,w)= r(u,\theta(v,w))$$ is a coquasi-Hopf algebra and $^{\ku K}\Mo$ is
tensor equivalent to $\ ^{\ku \Gamma}\Mo(\widehat{G})$, the
de-equivariantization of $\ ^{\ku \Gamma}\Mo$ by the affine group
scheme $\widehat{G}(-)=Alg(\ku G,-)$.

Now, we will explain how this construction determines the same data
of \cite[Example 2.2.6]{A}. If $\Gamma$ is abelian, the map
$r:\Gamma\times G\to \ku^*$ defines a group morphism $T:G\to
\widehat{\Gamma}$, $x\mapsto r(-,x)$ such that $\langle
T(x'),x\rangle = r(x,x')=1$, for all $x,x'\in G$, thus it defines an
inclusion of $\Vect_G$ as a Tannakian subcategory of
$\mathcal{Z}(\Vect_\Gamma)$, and the 3-cocycle over $K$ is:
$$\omega(u,v,w)= r(u,\theta(v,w))=\langle T(\theta(v,w)),u\rangle.$$

\subsection{Second example: Central extension of Hopf algebras}

Let $H$ be a Hopf algebra and $(K,r)$ a braided central Hopf
subalgebra, if $r(h,x)=\varepsilon(hx)$ for all $h\in H, x\in K$,
then $K\subset H$ is a central Hopf subalgebra and this defines a
central inclusion of the group scheme $G=Spec(K)$ in $H$. Also since
$K$ is central, $K^+H$ is a Hopf ideal and  $Q=H/K^+H$ is a quotient
Hopf algebra of $H$.

\begin{prop}\label{prop:central extensions}
Let $H$ be a Hopf algebra and $K\subset H$ a cleft central Hopf
subalgebra, then the de-equivariantization of $^H\Mo$ by $G=Spec(K)$
is tensor equivalent to the tensor category of comodules over the
Hopf algebra $Q=H/K^+H$.
\end{prop}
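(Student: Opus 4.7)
The strategy is to derive this proposition as a direct specialization of Theorem \ref{main result} to the case of trivial skew pairing. As noted immediately after Remark \ref{rmk definicion braided central}, a central Hopf subalgebra $K \subset H$ is exactly a braided central Hopf subalgebra with pairing $r = \varepsilon_H \otimes \varepsilon_K$. Together with the cleft assumption, this places us in the hypotheses of Theorem \ref{main result}, which already supplies a tensor equivalence $^H\Mo(G) \simeq {}^Q\Mo$ with $Q = H/K^+H$ carrying the coquasi-bialgebra structure described in \eqref{multiplicacion coquasi} and \eqref{associator coquasi}.

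The remaining content is to verify that this reconstructed structure coincides with the standard Hopf algebra structure on the quotient $Q$; centrality of $K$ guarantees that $K^+H = HK^+$ is a Hopf ideal, so $Q$ has an intrinsic Hopf algebra structure and the comparison makes sense. First I would fix a cointegral $\pi$ normalized so that $\varepsilon \pi = \varepsilon$ and $\pi(1) = 1$ (possible by the remark following Theorem \ref{main result}), so that the section $j: Q \to H$ of \eqref{eq:section} is available and the explicit formulas apply.

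Then I would substitute $r = \varepsilon \otimes \varepsilon$ into the two formulas. The correcting factor $r(a_2, \pi(j(b)_2))$ in \eqref{multiplicacion coquasi} collapses to $\varepsilon(a_2)\varepsilon(j(b)_2)$, leaving $m(a \otimes b) = \overline{j(a) j(b)}$, which is exactly the product on $Q$ inherited from $H$. An analogous collapse in \eqref{associator coquasi}, using $\varepsilon \pi = \varepsilon$ and the fact that the section $j$ preserves the counit, yields $\omega(a \otimes b \otimes c) = \varepsilon(a)\varepsilon(b)\varepsilon(c)$, so the associator is trivial and $Q$ is a genuine bialgebra. Being a Hopf quotient of $H$, it is then a Hopf algebra with the expected antipode, and the tensor equivalence from Theorem \ref{main result} becomes an ordinary tensor equivalence between $^H\Mo(G)$ and $^Q\Mo$.

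There is no serious obstacle: the proposition is essentially a consistency check showing that the general coquasi-bialgebra reconstruction recovers the classical quotient-Hopf-algebra picture when the braided central subalgebra is centrally Hopf in the ordinary sense. The only minor care required is in the cleft setup and the normalization of $\pi$, both of which are handled by the remark preceding the proof.
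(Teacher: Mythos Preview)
Your proposal is correct and follows essentially the same approach as the paper: apply Theorem~\ref{main result} with the trivial pairing $r=\varepsilon\otimes\varepsilon$, then substitute into \eqref{multiplicacion coquasi} and \eqref{associator coquasi} to see that the reconstructed product is the quotient product $\overline{j(a)j(b)}=\overline{ab}$ and the associator is $\varepsilon(a)\varepsilon(b)\varepsilon(c)$. The paper's proof is exactly this computation, carried out line by line.
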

\begin{proof}
The central Hopf subalgebra $K$ is braided central with
$r(h,k)=\varepsilon(hk)$ for all $h\in H,k\in K$. Then the product
and coassociator in the coquasi-bialgebra defined in Theorem
\ref{main result} are
\begin{align*}
    m(\overline{a}\otimes \overline{b}) &=  \overline{j(\overline{a_{1}})
    j(\overline{b})_{1}} r(\overline{a_{2}},
    \pi(j(\overline{b})_{2}))\\
    &=  \overline{j(\overline{a}_{1})
    j(\overline{b})_{1}} \varepsilon(\overline{a}_{2})\varepsilon(
    \pi(j(\overline{b})_{2}))\\
     &= \overline{j(\overline{a})
    j(\overline{b})}=  \overline{j(\overline{a})
    j(\overline{b})}=\overline{ab},\\
\omega(\overline{a}\otimes \overline{b}\otimes \overline{c}) &=
r(\overline{a},\pi(j(\overline{b})_{1}j(\overline{c})_{1}))
  r(j(\overline{b})_2,\pi(j(\overline{c})_2))\\
  &= \varepsilon(\overline{a})\varepsilon(\pi(j(\overline{b})_{1}j(\overline{c})_{1}))
  \varepsilon(j(\overline{b})_2)\varepsilon(\pi(j(\overline{c})_2))\\
  &= \varepsilon(\overline{a}\overline{b}\overline{c}),
\end{align*}
for all $a,b, c\in H, \overline{a}, \overline{b}, \overline{c}\in
Q$. Then the coquasi-bialgebra structure is the Hopf algebra
quotient structure, and the $^Q\Mo$ is tensor equivalent to the
de-equivariantization by $Spec(K)$.
\end{proof}

The interesting point of the Proposition above is that this provides
a categorical interpretation of the tensor category $^Q\Mo$ in terms
of de-equivariantization of an affine group scheme.

\begin{exa}
Let $G$ be a connected, simply connected complex simple Lie group,
and let $\mathfrak{g}$ be its associated Lie algebra. In \cite{ArG}
the authors consider the following setting: an injective map of Hopf
algebras $\iota:\cO(G)\hookrightarrow A$, and a surjective map,
$\pi:A\to Q$, satisfying the conditions
\begin{enumerate}
\item[i)] $\pi\circ\iota(a)=\epsilon(a)1_Q$, for all $a\in
\cO(G)$;
\item[ii)] $A^{co\pi}=\cO(G)$;
\item[iii)] $\ker \pi=\cO(G)^+ A$;
\item[iv)] either $A$ is flat as $\cO(G)$-module, or the functor
$\Ind:\ ^Q\cM\to\ ^A\cM$ is exact and faithful.
\end{enumerate}
Therefore, they obtain an equivalence between the category $^Q\cM$
and the de-equivariantization of $^A\cM$ by $G$, see \cite[Thm.
2.8]{ArG}. Now, our results give an alternative proof to this
equivalence and we can state that this is a tensor equivalence.

They apply the result to the following case. Let $l \geq 3$ be an
odd integer, relative prime to 3 if $\mathfrak{g}$ contains a
$G_2$-component, and let $\zeta$ be a complex primitive $l$-th root
of $1$. By $\cO_\zeta(G)$ we denote the complex form of the
quantized coordinate algebra of $G$ at $\zeta$ and by
$u_\zeta(\mathfrak{g})$ the Frobenius-Lusztig kernel of
$\mathfrak{g}$ at $\zeta$, see \cite{DL} for definitions.

We need the following facts about $\cO_\zeta(G)$, see \cite[Prop.
6.4]{DL}: it fits into the following cocleft central exact sequence
$$ 1\to \cO(G)\to \cO_\zeta(G) \to u_\zeta(\mathfrak{g})^*\to
1.$$ Then the tensor category of modules over the Frobenius-Lusztig
kernel is a de-equivariantization of $\cO_\zeta(G)$, which is the
main result of \cite{ArG}. Moreover, the main result in \cite{AnG}
establishes that any quantum subgroup is obtained as a cocleft
central exact sequence, similar to the previous one, that is, we can
view these constructions as de-equivariantizations.

The same construction works for the restricted two parameter
(pointed) quantum group
$\widehat{u}_{\alpha,\beta}(\mathfrak{gl}_n)$ with the algebraic
group $GL_n$, where $\alpha,\beta\in \ku$ are such that
$\alpha\beta^{-1}$ is a root of unity of order $l$, and
$\alpha^l=\beta^l=1$. According to \cite[Cor. 5.3, 5.15]{Ga}, we have a
central extension of Hopf algebras
$$ 1\to \cO(GL_n)\to \cO_{\alpha,\beta}(GL_n) \to \widehat{u}_{\alpha,\beta}(\mathfrak{gl}_n)^*\to
1.$$ Therefore Proposition \ref{prop:central extensions} shows that
the category of modules over
$\widehat{u}_{\alpha,\beta}(\mathfrak{gl}_n)$ is the
de-equivariantization of the category of comodules over
$\cO_{\alpha,\beta}(GL_n)$ by $GL_n$. A similar situation holds for
any quantum subgroup of this quantum group.
\end{exa}

\subsection{A generalization of the family of algebras
$A(H,s)$}\label{subseccion ejemplo} In this Subsection we shall
assume that $\ku$ is an algebraically closed field of characteristic
zero.

Let $\Gamma$ be a finite group and $\widehat{\Gamma}=Hom(\Gamma,\ku
^*)$. We consider a finite-dimensional coradically graded pointed
Hopf algebra $H=\oplus_{n\geq 0}H_n$, with $G(H)=\Gamma$. We assume
that $H$ is generated as an algebra by $\Gamma$ and $H_1$; this is
always the case if $\Gamma$ is abelian, see \cite[Theorem 4.15]{Ang
present}. We fix a basis $x_1, \cdots, x_{\theta}$ of the space $V$
of coinvariants of $H_1$, so $H\simeq \Bc(V)\#\ku\Gamma$, where
$\Bc(V)$ is the Nichols algebra associated to $V$, and
$\Delta(x_i)=x_i\otimes g_i+1\otimes x_i$ for some $g_i\in\Gamma$.

\begin{prop}\label{prop:case pointed HA}
Let $H=\bigoplus_{n=0}^\infty H_{n}$, $\Gamma$,
$x_1,\cdots,x_\theta$ be as before. There exists a bijection between
\begin{enumerate}
  \item[(a)] central braided Hopf subalgebras $(K,r)$, and
  \item[(b)] pairs $(G, \Phi)$, where $G$ is a central subgroup of
  $\Gamma$, and $\Phi:G\to\widehat{\Gamma}$ is a morphism of group
  such that
$$\langle g', \Phi(g)\rangle =1, \qquad gx_i g^{-1}= \langle g_i,\Phi(g)\rangle x_i,$$
for all $g,g'\in G$, $1\leq i\leq \theta$.
\end{enumerate}
The correspondence is given by defining $K=\ku G$, and extending the
evaluation map $<\cdot,\Phi(\cdot)>:\Gamma\times G\to \ku$, linearly to
$H_0\otimes K$, and as zero over $H_n$, $n\geq 1$.
\end{prop}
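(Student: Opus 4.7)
The plan is to set up the bijection by defining the two maps and verifying the required axioms on each side.

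For direction (b)$\Rightarrow$(a), given $(G,\Phi)$, set $K=\ku G$; since $G\subset Z(\Gamma)$, $K$ is a commutative Hopf subalgebra of $H$. Define $r:H\otimes K\to\ku$ on the basis by $r(\gamma,g)=\langle\gamma,\Phi(g)\rangle$ for $\gamma\in\Gamma$, $g\in G$, and $r(y,g)=0$ whenever $y\in H_n$ with $n\geq 1$. The multiplicativity axioms \eqref{cuasi-trenza 1} and \eqref{cuasi-trenza 2} reduce to the fact that each $\Phi(g)$ is a character of $\Gamma$ and to the observation that $\Delta$ preserves the coradical filtration, so both sides automatically vanish as soon as a factor lives in positive degree; \eqref{cuasi-trenza 3} is immediate and \eqref{conmutatividad} follows from $\Phi(g)|_G=1$. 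For the crucial axiom \eqref{casi commutativa}, both sides are multiplicative in $h$, so one reduces to the generators $h\in\Gamma$ and $h=x_i$: in the former case centrality of $G$ makes both sides equal $\Phi(g)(\gamma)\,g\gamma$; in the latter, a short computation using $\Delta(x_i)=x_i\otimes g_i+1\otimes x_i$ and $r(x_i,g)=0$ shows that the identity is precisely $gx_ig^{-1}=\langle g_i,\Phi(g)\rangle x_i$, which is the given hypothesis.

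For direction (a)$\Rightarrow$(b), the first and most delicate step is to show that $K=\ku G$ for some subgroup $G\subset \Gamma$. Being a Hopf subalgebra of the pointed Hopf algebra $H$, $K$ is pointed; Remark~\ref{rmk definicion braided central}(2) gives that $K$ is commutative. Since $\ku$ is algebraically closed of characteristic zero, applying Cartier--Gabriel--Kostant to the cocommutative dual $K^{*}$ yields $K^{*}=\ku G'$ for a finite group $G'$, and the pointedness of $K$ forces $G'$ to be abelian, whence $K\cong\ku\widehat{G'}=:\ku G$ with $G=G(K)\subset\Gamma$.

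Next, for each $g\in G$, axiom \eqref{cuasi-trenza 1} says that $r(-,g):H\to\ku$ is an algebra map. Since the Jacobson radical of $H=\Bc(V)\#\ku\Gamma$ is $\Bc(V)^{+}\cdot\ku\Gamma=\bigoplus_{n\geq 1}H_n$ with semisimple quotient $\ku\Gamma$, every algebra map $H\to\ku$ factors through $\ku\Gamma$; hence $r$ vanishes on $H_n\otimes K$ for $n\geq 1$, and $\Phi(g):=r(-,g)|_\Gamma\in\widehat{\Gamma}$. Axiom \eqref{cuasi-trenza 2} evaluated on $h\in\Gamma$ shows that $\Phi:G\to\widehat{\Gamma}$ is a group morphism, and \eqref{conmutatividad} yields $\Phi(g)|_G=1$. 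Evaluating \eqref{casi commutativa} on $(h,k)=(\gamma,g)$ and using $\Phi(g)(\gamma)\in\ku^{\times}$ forces $g\gamma=\gamma g$ for all $\gamma$, so $G\subset Z(\Gamma)$; evaluating it on $(h,k)=(x_i,g)$ recovers $gx_ig^{-1}=\langle g_i,\Phi(g)\rangle x_i$. The two assignments are mutually inverse by construction.

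I expect the main obstacle to be the step showing $K=\ku G$: the Cartier--Gabriel--Kostant route via $K^{*}$ seems cleanest, but an alternative direct argument ruling out non-trivial $(1,g)$-skew primitives in $K$ by expanding $\Delta(x^{n})$ and exploiting the non-vanishing of middle binomials in characteristic zero would also work and might be preferable if one wants to avoid invoking Cartier duality.
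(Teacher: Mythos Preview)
Your proposal is correct and follows the same overall architecture as the paper---define the two assignments and verify the axioms---but it differs from the paper's argument in two places that are worth noting.

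First, for the step $K=\ku G$: the paper simply asserts $K\subset H_0$, which is really Cartier's theorem (a finite-dimensional commutative Hopf algebra over a field of characteristic zero is reduced, hence cosemisimple, so $K=K_0\subset H_0=\ku\Gamma$). Your route through $K^{*}$ and Cartier--Gabriel--Kostant is a valid alternative, but it is slightly heavier; the direct appeal to Cartier's theorem on $K$ itself is shorter and avoids the detour through duality.

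Second, for the vanishing of $r$ on $\bigoplus_{n\ge 1}H_n$: the paper does not use the Jacobson radical. Instead it applies \eqref{casi commutativa} to $h=x_i$, $k=g$ and reads off both facts at once: comparing degree-$0$ components of
\[
r(x_i,g)\,gg_i + gx_i \;=\; r(g_i,g)\,x_ig + r(x_i,g)\,g
\]
forces $r(x_i,g)=0$ (since $g_i\neq 1$), and comparing degree-$1$ components gives $gx_ig^{-1}=r(g_i,g)x_i$. The vanishing on all of $H_{\ge 1}$ then follows because $H$ is generated by $\Gamma$ and the $x_i$. Your Jacobson-radical argument is cleaner in that it handles all positive degrees in one stroke, but the paper's computation is more self-contained and has the virtue of extracting the conjugation identity from the same line. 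Either approach is fine.
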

\begin{proof}
Given a central braided Hopf subalgebra $K \subset H$, we
have that $K\subset H_0$ is commutative, so $K=\ku G$ for some subgroup
$G$ of $\Gamma$. By \eqref{casi commutativa}, $G$ is inside the
center of $\Gamma$. By \eqref{cuasi-trenza 1} and
\eqref{cuasi-trenza 2}, we have a morphism of groups $\Phi:G\to
\widehat{\Gamma}$ given by
$$ <\gamma,\Phi(g)>:= r(\gamma, g), \qquad \gamma \in \Gamma, g\in G, $$
such that $\langle g', \Phi(g)\rangle =1$ for all $g,g'\in G$. Now
by \eqref{cuasi-trenza 2} we have also that
$$ r(x_i,g)\, gg_i+ g x_i = r(g_i,g)\, x_i  g+ r(x_i,g) \, g, $$
so $r(x_i,g)=0$, and $gx_i g^{-1}= r(g_i,g)x_i$ for all $i$ and all
$g\in G$, because $H$ is graded and $g_i\neq 1$. As $H$ is generated by skew
primitive and group-like elements, we deduce that
$$ r(x,k)= 0, \qquad \mbox{for all } k\in K, x\in H_n, n\geq 1. $$

The converse is easy to prove.
\end{proof}

\begin{rmk}
Fix a set $Q\subset \Gamma$ of representatives of the right cosets
of $G$ in $\Gamma$. Note that the map $\pi:H\to K=\ku G$ given as in
Subsection \ref{subsection:baby example} over $H_0$, and extended as
$0$ over the other components, is an integral for $K$.
\end{rmk}

\begin{defi}
Let $H=\bigoplus_{n\geq0} H_{n}$ be a coradically graded
finite-dimensional Hopf algebra such that $H_0=k\Gamma$, where
$\Gamma$ is a finite group, and $H$ is generated by group-like and
skew-primitive elements. For each pair $(G,\Phi)$ as in the
Proposition \ref{prop:case pointed HA}, we shall denote by
$A(H,G,\Phi)$ the coquasi-Hopf algebra associated, constructed by
using Theorem \ref{main result}.
\end{defi}

\begin{exa}
Let $H=\oplus_{n\geq 0} H_n$ be as above, where $G(H)=\Gamma$ is an
abelian group. Therefore $H$ is generated by the group-like elements
and a finite set $x_1,\ldots , x_\theta$ of $(\gamma_i,1)$-primitive
elements, i.e.
$$\Delta(x_i)=x_i\otimes \gamma_i+1\otimes x_i, \qquad  i\in \{1,\ldots ,\theta\},$$
and also we can suppose that there are characters
$\chi_i\in\widehat{\Gamma}$ such that
$$ \gamma x_i\gamma^{-1}=\chi_i(\gamma)x_i,  \qquad  i\in \{1,\ldots ,\theta\}, \gamma\in\Gamma.$$
In this case, $\Phi$ satisfies the condition $\chi_i(\gamma)=\langle
\gamma_i, \Phi(\gamma)\rangle$ for all $i\in \{1,\ldots ,\theta\}$
and all $\gamma\in\Gamma$. Therefore $\Phi(\gamma)$ is uniquely
determined (and possibly it does not exist) when the $\gamma_i$'s
generate $\Gamma$ as a group.

The Nichols algebra $\Bc(V)$ admits a $\N^\theta$-gradation, and we
can fix a basis $B$ of $\Bc(V)$ whose elements are
$\N^\theta$-homogeneous, and such that $1\in B$. For each $\mathbf{x}\in B$
we denote $|\mathbf{x}|\in\N^\theta$ its degree, and
$$ \gamma_\mathbf{x}:=\gamma_1^{a_1}\cdots \gamma_\theta^{a_\theta} , \quad
\chi_\mathbf{x}:=\chi_1^{a_1}\cdots \chi_\theta^{a_\theta} , \qquad \mbox{if
} |\mathbf{x}|=(a_1,\ldots,a_\theta)\in\N^\theta.$$ Therefore $\gamma
\mathbf{x}\gamma^{-1}=\chi_\mathbf{x}(\gamma)\mathbf{x}$ for all $\gamma\in\Gamma$, and
$\Delta(\mathbf{x})$ is written as the sum of $\mathbf{x}\otimes 1$ plus
$\gamma_\mathbf{x}\otimes \mathbf{x}$ plus terms in intermediate degrees for the
$\N$-gradation. Fix a set of representatives elements
$q_1=e,\ldots,q_t \in \Gamma$ of $\Gamma/G$, so $Q$ has a basis
$(\overline{q_i\mathbf{x}})_{\mathbf{x}\in B, 1\leq i\leq t}$. Therefore the
multiplication and the associator of $Q$ are given by:
\begin{align*}
m(q_i\mathbf{x},q_j\mathbf{y})&=
r\big(q_i\gamma_{\mathbf{x}},\pi(q_j\gamma_{\mathbf{y}})\big)
\overline{q_i\mathbf{x}\cdot q_j\mathbf{y}} =
\Phi\big(\pi(q_j\gamma_{\mathbf{y}})\big)(q_i\gamma_{\mathbf{x}})
\overline{q_i\mathbf{x}\cdot q_j\mathbf{y}} ,
\\ \omega(q_i\mathbf{x},q_j\mathbf{y},q_k\mathbf{z})&=r\big( q_i,\pi(q_jq_k)\big)r\big( q_j,\pi(q_k)\big)\delta_{\mathbf{x},1}\delta_{\mathbf{y},1}
\delta_{\mathbf{z},1}
\end{align*}
for any $x,y,z\in B$ and $1\leq i,j,k\leq t$.

\medskip

More concretely, suppose that $\Gamma$ is a cyclic group of order $m^2$,
generated by $\gamma$, and that $x_1,\dots, x_\theta$ are
the skew-primitive elements. Thus, if $q$ is a primitive $m^2$-roof of unity,
there are unique integers $d_i, b_i$, module $m^2$, such that
$$\chi_i(\gamma)=q^{d_i}, \qquad \Delta(x_i)=x_i\otimes\gamma^{b_i}+1\otimes x_i.$$
Set $G=\langle g\rangle$, where $g=\gamma^n$, so $G\simeq\Z_n$, and $\chi\in\widehat{\Gamma}$ such that $\chi(\gamma)=q$.
A morphism $\Phi:G\to \widehat{\Gamma}$ is determined by an integer $s$ (unique modulo $n$) such that $\Phi(g)=\chi^{ns}$.
Therefore the conditions in Proposition \ref{prop:case pointed HA} are satisfied for each element in
$$ \Upsilon'(H):=\{s:\, 0\leq s\leq n-1, \quad b_is\equiv d_i (n), \forall i=1,\ldots,\theta \}. $$
A set of representatives of $\Gamma/G\simeq\Z_n$ is given by $\gamma^i$, $0\leq i\leq n-1$.
\begin{rmk}
If $H$ is a finite dimensional Hopf algebra as in this example, then
$H^*$ also is of this type and $\Upsilon'(H^*)=\Upsilon(H)$, where
$\Upsilon(H)$ was defined in \cite{A}.
\end{rmk} For each $s\in \Upsilon'(H)$ there exists a coquasi-Hopf
algebra $A'(H,s)$. We identify the group of simple (one-dimensional)
comodules with $\Z_n$, and the 3-cocycle determining the associator
is
$$ \omega(\gamma^i,\gamma^j,\gamma^k)=q^{nsi\big(j+k-(j+k)'\big)}, \qquad  0\leq i,j,k\leq n-1,$$
where $j'$ denotes the remainder of $j$ in the division by $n$. Note
that $A'(H,s)$ is dual to the quasi-Hopf algebra $A(H^*,s)$ of
\cite{A}, and these quasi-Hopf algebras include the examples in
\cite{Ge}.
\end{exa}

\begin{exa}
We consider now de-equivariantizations of some pointed Hopf algebras
related with small quantum groups by applying the previous
construction.

We fix then a finite Cartan matrix $A=(a_{ij})_{1\leq i,j \leq
\theta}$ corresponding to a semisimple Lie algebra $\mathfrak{g}$,
positive integers $d_i$, $1\leq i\leq \theta$ such that they are the
minimal ones satisfying $d_ia_{ij}=d_ja_{ji}$, and let $\Delta_+$ be
its set of positive roots and $M:= |\Delta_+|$.

Fix also a root of unity $q$ of order $N=mn$, $m,n>1$,
$q_{ij}:=q^{d_ia_{ij}}$, $\{\alpha_i\}$ the canonical basis of
$\Z^\theta$, $\chi:\Z^{\theta}\times \Z^{\theta}\to \ku^\times$ the
bicharacter determined by $\chi(\alpha_i,\alpha_j)=q_{ij}$, $1\leq
i,j\leq \theta$. Let $q_{\beta}:=\chi(\beta,\beta)$ and
$N_\beta:=\ord q_\beta$, for each $\beta\in\Delta_+$.

We will describe the corresponding Nichols algebra $\Bc(V)$ of
diagonal type attached to $(q_{ij})$ and the corresponding Hopf
algebra obtained by bosonization by a particular abelian group. We
refer to \cite[Theorems 1.25, 3.1]{Ang present} for the
corresponding statements about the Nichols algebra. Fix a basis
$x_1,\ldots,x_\theta$ of $V$, the group $\Gamma=(\Z_N)^\theta$, with
generators $\gamma_1,\ldots,\gamma_\theta$ of each cyclic group of
order $N$, and consider the realization of $V$ as a Yetter-Drinfeld
module with comodule structure determined by
$\delta(x_i)=\gamma_i\otimes x_i$. Recall that the braided adjoint
action of $x_i$ has the following property:
$$(\ad_c x_i)y:=x_iy- \chi(\alpha_i,\beta) yx_i, \qquad y\in\Bc(V) \, \Z^\theta-\mbox{homogeneous of degree }\beta.$$

The associated finite-dimensional pointed Hopf algebra
$H=\Bc(V)\#\ku\Gamma$ is described as follows. As an algebra, it is
generated by $\gamma_1,\ldots,\gamma_\theta$, $x_1,\ldots,x_\theta$,
which satisfies the following relations:
\begin{align*}
&\gamma_i^N=1, \qquad \gamma_i\gamma_j=\gamma_j\gamma_i,\qquad
&\gamma_ix_j=q_{ij}x_j\gamma_i,
\\ &(\ad_cx_i)^{1-a_{ij}}x_j=0,\quad i\neq j, \qquad &x_\beta^{N_\beta}=0,
\quad \beta\in\Delta_+,
\end{align*}
if $N\geq 8$ (otherwise we need extra relations). Each $x_\beta$ is
an homogeneous element of $\Bc(V)$ of degree $\beta$, obtained for a
fixed convex order on the roots $\beta_1<\beta_2<\cdots<\beta_M$,
and $H$ has a PBW basis $B$ as follows:
\begin{align*}
\left\{\gamma_1^{a_1}\cdots\gamma_\theta^{a_\theta}x_{\beta_M}^{b_M}
\cdots x_{\beta_1}^{b_1}: \, 0\leq a_i <N, 0\leq b_j <N_{\beta_j}
\right\}.
\end{align*}
The coproduct is determined by
$$ \Delta(\gamma_i)=\gamma_i\otimes\gamma_i, \qquad \Delta(x_i)=x_i\otimes\gamma_i+1\otimes x_i. $$

We consider $n_i, m_i\in \N$ such that $N=n_im_i$ for each $1\leq
i\leq \theta$. For each $a\in\N$, we denote by $\mu_i(a)$ the
remainder of $a$ on the division by $n_i$. Call
$g_i=\gamma_i^{n_i}$, and let $G$ be the subgroup of $\Gamma$
generated by $g_1,\ldots,g_\theta$. Therefore,
$$ G\simeq \Z_{m_1}\times\cdots\times\Z_{m_\theta},\qquad G':=\Gamma/G\simeq \Z_{n_1}\times\cdots\times\Z_{n_\theta},$$
and a set of representatives of $G'$ is given by
$\gamma_1^{a_1}\gamma_2^{a_2}$, $0\leq a_i<n_i$. With this
information we can determine $j:Q\to H$ and $\pi:H\to\ku G$ by
\begin{align*}
j(\overline{\gamma_1^{a_1}\cdots\gamma_\theta^{a_\theta}x})&
=\gamma_1^{\mu_1(a_1)}\cdots\gamma_\theta^{\mu_\theta(a_\theta)}\mathbf{x},
\\
\pi(\gamma_1^{a_1}\cdots\gamma_\theta^{a_\theta}\mathbf{x})&=
\gamma_1^{a_1-\mu_1(a_1)}\cdots\gamma_\theta^{a_\theta-\mu_\theta(a_\theta)}\varepsilon(\mathbf{x}),
\end{align*}
where $a_i\in \N$, $\mathbf{x}=x_{\beta_M}^{b_M} \cdots
x_{\beta_1}^{b_1}$. By Proposition \ref{prop:case pointed HA} we
have that
$\langle\gamma_j,\Phi(g_i)\rangle=\chi_j(g_i)=q_{ij}^{n_i}$ for
each pair $1\leq i,j\leq\theta$, so $\Phi$ is univocally determined,
and we need the extra conditions $N|n_in_j$, which is equivalent to
$m_i|n_j$, because $\langle g_j,\Phi(g_i)\rangle=1$ for all $i,j$.
To determine explicitly the coquasi-Hopf algebra structure of $Q$,
we consider the basis
$$ \overline{B}:=\left\{ \overline{\gamma_1^{a_1}\cdots\gamma_\theta^{a_\theta}\mathbf{x}}: \, 0\leq a_i<n_i, \, \mathbf{x}=x_{\beta_M}^{b_M} \cdots
x_{\beta_1}^{b_1} \right\}. $$ Given two elements
$\mathbf{x},\mathbf{y}$ of $\Bc(V)$ of degree
$(e_1,\ldots,e_\theta),(f_1,\ldots,f_\theta)\in\N_0^\theta$,
respectively, and $0\leq a_i, b_i<n_i$, we compute
\begin{align*}
m\left(\overline{\gamma_1^{a_1}\cdots\gamma_\theta^{a_\theta}\mathbf{x}},
\overline{\gamma_1^{b_1}\cdots\gamma_\theta^{b_\theta}\mathbf{y}}\right)
&= \prod_{1\leq
i,j\leq\theta}q_{ij}^{\left(b_j+f_j-\mu_j(b_j+f_j)\right)(a_i+e_i)-b_je_i}
\\ & \overline{\gamma_1^{\mu_1(a_1+b_1)}\cdots\gamma_\theta^{\mu_\theta(a_\theta+b_\theta)}\mathbf{x}\mathbf{y}},
\end{align*}
For each $\mathbf{x},\mathbf{y},\mathbf{z}\in \Bc(V)$, $0\leq a_i,
b_i,c_i<n_i$, the associator is computed as
$$ \omega\left(\overline{\gamma_1^{a_1}\cdots\gamma_\theta^{a_\theta}\mathbf{x}},
\overline{\gamma_1^{b_1}\cdots\gamma_\theta^{b_\theta}\mathbf{y}},
\overline{\gamma_1^{c_1}\cdots\gamma_\theta^{c_\theta}\mathbf{z}}\right)=
\delta_{\mathbf{x},1}\delta_{\mathbf{y},1}
\delta_{\mathbf{z},1}\prod_{1\leq
i,j\leq\theta}q_{ij}^{\left(b_j+c_j-\mu_j(b_j+c_j)\right)a_i}.$$

\medskip

Note that we can obtain the quasi-Hopf algebras appearing in
\cite{EG2} as the dual structures of the coquasi-Hopf algebras
obtained for $\Gamma=(\Z_{n^2})^\theta$ and $G\cong(\Z_n)^\theta$ as
a subgroup of $\Gamma$, i.e. $N=n^2$, $m_i=n_i=n$.
\end{exa}

\begin{exa}
Finally we consider some de-equivariantizations related with a
Nichols algebra of diagonal type but not of Cartan type. Consider a
braiding whose diagram is the last one of row 9 in \cite[Table
1]{H-classif}, and an associated $H=\Bc(V)\#\ku\Gamma$. Here we fix
$\Gamma=\Z_{9N}\times\Z_{18M}$, with generators $\gamma_1$,
$\gamma_2$ of each cyclic subgroup, respectively, and a root of
unity $q$ of order 9. Using the presentation given for the
corresponding Nichols algebra in \cite[Theorem 3.1]{Ang present}, we
can describe $H$ as follows. As an algebra, it is generated by
$\gamma_1$, $\gamma_2$, $x_1$, $x_2$, and relations
\begin{align*}
\gamma_1^{9N}&=\gamma_2^{18M}=1, \qquad
\gamma_1\gamma_2=\gamma_2\gamma_1,\qquad
\gamma_ix_j\gamma_i^{-1}=q_{ij}x_j,
\\ x_1^3&=x_2^2=x_{12}^{18}=x_{112}^{18}=x_{112}y-q yx_{112}=0,
\end{align*}
where $q_{11}=q^3$, $q_{12}=q^4=q_{21}$, $q_{11}=-1$, $(\ad_c x_i)y:=x_iy-(\gamma_i y\gamma_i^{-1})x$ for each $i=1,2$, and each $y\in\Bc(V)$, and we
consider:
\begin{align*}
& x_{12}=(\ad_cx_1)x_2, \qquad & y=x_{112}x_{12}+x_{12}x_{112},
\\ & x_{112}=(\ad_cx_1)^2x_2, \qquad & z=yx_{12}-q^2x_{12}y,
\end{align*}
so by \cite[Theorem 1.25]{Ang present} $H$ has a PBW basis $B$ as follows:
\begin{align*}
\left\{\gamma_1^{a_1}\gamma_2^{a_2}x_2^{b_6}x_{12}^{b_5}z^{b_4}y^{b_3}x_{112}^{b_2}x_1^{b_1}:
\, 0\leq a_1 <9N, 0\leq a_2 <18M, 0\leq b_2, b_5 <18,\right.
\\ \left. b_1,b_3\in\{0,1,2\}, b_4,b_6\in\{0,1\}, \right\}.
\end{align*}
The coproduct is determined by
$$ \Delta(\gamma_i)=\gamma_i\otimes\gamma_i, \qquad \Delta(x_i)=x_i\otimes\gamma_i+1\otimes x_i. $$

Fix $n, n_1, m, m_1\in \N$ such that $9N=nn_1$ and $18M=mm_1$. For
each $a\in\N$, we denote by $a'$ (respectively, $a''$) the remainder
on the division by $n$ (respectively, $m$). Call $g_1=\gamma_1^n$,
$g_2=\gamma_2^m$, and $G$ the subgroup of $\Gamma$ generated by
$g_1$ and $g_2$. Therefore,
$$ G\simeq \Z_{n_1}\times\Z_{m_1},\qquad G':=\Gamma/G\simeq \Z_{n}\times\Z_{m},$$
and a set of representatives of $G'$ are
$\gamma_1^{a_1}\gamma_2^{a_2}$, $0\leq a_1<n$, $0\leq a_2<m$. Also,
we can write explicitly:
$$ j(\overline{\gamma_1^{a_1}\gamma_2^{a_2}\mathbf{x}})=\gamma_1^{a_1'}\gamma_2^{a_2''}\mathbf{x},
\quad \pi(\gamma_1^{a_1}\gamma_2^{a_2}\mathbf{x})=
\gamma_1^{a_1-a_1'}\gamma_2^{a_2-a_2''}\varepsilon(\mathbf{x})\in G,
\qquad a_i\in \N, \mathbf{x}\in B. $$ By Proposition \ref{prop:case
pointed HA} we have that
\begin{align*}
&\langle \gamma_1,\Phi(g_1)\rangle=\chi_1(g_1)=q_{11}^n,
&\langle \gamma_2,\Phi(g_1)\rangle=\chi_2(g_1)=q_{12}^n,\\
&\langle \gamma_1,\Phi(g_2)\rangle=\chi_1(g_2)=q_{21}^m,
&\langle \gamma_2,\Phi(g_2)\rangle=\chi_2(g_2)=q_{22}^m,
\end{align*} so $\Phi$ is univocally determined, and moreover it tells us that
$m,n$ should satisfy $3|n$, $2|m$, $9|mn$, because $\langle
g_j,\Phi(g_i)\rangle=1$ for all $i,j\in\{1,2\}$.

We compute the structure of the coquasi-Hopf algebra associated to
this datum. Note that the following set is a basis of $Q$:
$$ \overline{B}:=\left\{ \overline{\gamma_1^{a_1}\gamma_2^{a_2}\mathbf{x}}: \, 0\leq a_1<n,\, 0\leq a_2<m, \, \mathbf{x}\in B\, \right\}. $$
Given $\mathbf{x},\mathbf{y}\in B$ of degree
$(e_1,e_2),(f_1,f_2)\in\N_0^2$, respectively, and $0\leq a_1,
b_1<n$, $0\leq a_2,b_2<m$, we have that
\begin{align*}
m(\overline{\gamma_1^{a_1}\gamma_2^{a_2}\mathbf{x}},
\overline{\gamma_1^{b_1}\gamma_2^{b_2}\mathbf{y}}) &=
q_{11}^{-b_1e_1}q_{12}^{(b_1+f_1-(b_1+f_1)')(a_2+e_2)-b_1e_2}
\\ & q_{21}^{(b_2+f_2-(b_2+f_2)'')(a_1+e_1)-b_2e_1}q_{22}^{-b_2e_2}
\overline{\gamma_1^{(a_1+b_1)'}\gamma_2^{(a_2+b_2)''}\mathbf{x}\mathbf{y}},
\end{align*}
where we use that $3|n$, $2|m$. And the associator is given by
$$ \omega(\overline{\gamma_1^{a_1}\gamma_2^{a_2}\mathbf{x}}, \overline{\gamma_1^{b_1}\gamma_2^{b_2}\mathbf{y}},
\overline{\gamma_1^{c_1}\gamma_2^{c_2}\mathbf{z}})=
\delta_{\mathbf{x},1}\delta_{\mathbf{y},1}
\delta_{\mathbf{z},1}q_{12}^{a_2(b_1+c_1-(b_1+c_1)')}q_{21}^{a_1(b_2+c_2-(b_2+c_2)'')},
$$ where $\mathbf{x},\mathbf{y},\mathbf{z}\in B$, $0\leq a_1,
b_1,c_1<n$, $0\leq a_2,b_2,c_2<m$.
\end{exa}

\end{document}